\documentclass[12pt,a4paper]{amsart}
\usepackage{amsfonts, comment}
\usepackage{amsthm}
\usepackage{amsmath, amscd, amssymb, bm}
\usepackage{t1enc}
\usepackage[mathscr]{eucal}
\usepackage{indentfirst}
\usepackage{graphicx}
\usepackage{graphics, tikz}
\usepackage{hyperref}
\usepackage{pict2e}
\usepackage{epic}
\numberwithin{equation}{section}

\usepackage{epstopdf}

\usepackage{enumitem}
\usepackage{times}
\usepackage[english]{babel}
\usepackage[utf8]{inputenc}

\DeclareMathOperator{\reg}{reg}

\newtheorem{theorem}{Theorem}[section]

\newtheorem{proposition}[theorem]{Proposition}
\newtheorem{corollary}[theorem]{Corollary}

\theoremstyle{remark}
\newtheorem{remark}[theorem]{Remark}

\theoremstyle{definition}
\newtheorem{example}[theorem]{Example}

\newtheorem{definition}[theorem]{Definition}
\newtheorem{qquestion}[theorem]{Question}

\begin{document}

\title[Realizing regularity bounds for binomial edge ideals]{Characterization of some graphs realizing regularity bounds for binomial edge ideals}
\author[N. Erey]{Nursel Erey$^{1}$}
\author[M. Ergen]{Muhammed Ergen$^{2}$}
\author[T. Hibi]{Takayuki Hibi$^{3}$}

\address{$^{1}$Gebze Technical University \\ Department of Mathematics \\
Gebze \\ 41400 Kocaeli \\ Turkey} 

\email{nurselerey@gtu.edu.tr}

\address{$^{2}$Gebze Technical University \\ Department of Mathematics \\
Gebze \\ 41400 Kocaeli \\ Turkey} 

\email{mergen@gtu.edu.tr}

\address{$^{3}$Department of Pure and Applied Mathematics, Graduate School of Information Science and Technology, Osaka University, Suita, Osaka 565--0871, Japan}
\email{hibi@math.sci.osaka-u.ac.jp}

\keywords{Binomial edge ideals, Castelnuovo-Mumford regularity, Intersection graphs}

\begin{abstract}

 In this paper, we characterize all graphs $G$ satisfying \[\reg(S/J_G)=\ell(G)=c(G)\] where $\ell(G)$ is the sum of the lengths of the longest induced paths in each connected component of $G$ and $c(G)$ is the number of the maximal cliques of $G$. We also characterize all connected graphs $G$ that satisfy \[\reg(S/J_G)=\ell(G)=|V(G)|-\omega(G)+1\] where $\omega(G)$ is the clique number of $G$. Moreover, we investigate the possible values of the regularity of $S/J_G$ within the intervals $[\ell(G), c(G)]$ and $[\ell(G), |V(G)|-\omega(G)+1]$. 

\end{abstract}

\maketitle


    \section*{Introduction}

    Let $G$ be a simple graph on the vertex set $V(G)=[n]=\{1,\ldots,n\}$ and edge set $E(G)$. Also, let $K$ be a field and let $S=K[x_1,\ldots,x_n,y_1,\ldots,y_n]$ be the polynomial ring over $K$ in $2n$ indeterminates. The \textit{binomial edge ideal} of $G$, denoted by $J_G$, is defined as the ideal of $S$ generated by the binomials $f_{ij}=x_i y_j - x_j y_i$ where $i<j$ and $\{i,j\} \in E(G)$. A binomial edge ideal is thus a natural generalization of the ideal of $2$-minors of a $2\times n$-matrix of indeterminates. Binomial edge ideals were introduced independently by Herzog et al. \cite{BEICIS} and by Ohtani \cite{GIGS2M}. 

    In recent years, many algebraic and homological properties of binomial edge ideals, such as their Gröbner bases, graded free resolutions, Hilbert functions and Cohen-Macaulayness have been investigated, and the relations between the algebraic properties of $J_G$ and the combinatorial properties of $G$ have been studied (see \cite{BEIWQGB, CMBEI, BEICIS, BEIOG, BEIWPR, LaC, LaC2, HFBEI}).

    In particular, the Castelnuovo-Mumford regularity of binomial edge ideals has emerged as one of the main topics of research on binomial edge ideals. Various lower and upper bounds for the regularity of $S/J_G$ have been found in terms of the combinatorial properties of $G$, and the regularity of $S/J_G$ has been explicitly determined for some specific graph classes $G$ (see \cite{ LBEI, RBEICBG, TCMRBEI, BEIR3, PCRBEI, RBEICG, RBBEI}). Matsuda and Murai \cite{RBBEI} demonstrated that \[\ell(G) \leq \reg(S/J_G) \leq n-1\] where $\ell(G)$ is the sum of the lengths of the longest induced paths in each connected component of $G$. In \cite{PCRBEI}, Malayeri et al. found a combinatorial upper bound for $\reg(S/J_G)$, given by \[ \reg(S/J_G) \leq c(G) \] where $c(G)$ is the number of maximal cliques in $G$. Another upper bound for $\reg(S/J_G)$ was obtained by Ene et al. in \cite{LBEI}, expressed as \[\reg(S/J_G) \leq n-\omega(G)+1\] where $\omega(G)$ is the clique number of $G$. The upper bounds $n-\omega(G)+1$ and $c(G)$ are incomparable; each can be sharper depending on the combinatorial properties of $G$.

    In this paper, we focus on the conditions under which the above mentioned lower and upper bounds for the regularity of binomial edge ideals coincide. The family of graphs we consider can be viewed as Cameron-Walker graph analogues of edge ideals (see \cite{ASCWG}). We base this analogy on the fact that Cameron-Walker graphs are exactly those graphs whose edge ideals' regularity realize the well-known upper and lower bounds \cite{MIEIHGBN,CIBNGI} in terms of (induced) matching numbers of the graph.

    In \cite{RBEICG}, Malayeri et al. characterized all chordal graphs satisfying the equality $\reg(S/J_G)=\ell(G)=c(G)$, and referred to them as strongly interval graphs. In the present paper, we extend their result to all graphs, regardless of the chordal condition. More precisely, we characterize in Corollary~\ref{CLgraphcorollary} all graphs $G$ satisfying $\reg(S/J_G)=\ell(G)=c(G)$ and we call them CL-graphs. Our next main result, Corollary~\ref{WLgraphcorollary} provides a complete characterization of all connected graphs $G$ satisfying the equality $\reg(S/J_G)=\ell(G)=|V(G)|-\omega(G)+1$, which we call WL-graphs.

    In Sections \ref{subsec:CL} and \ref{subsec:WL} we investigate the possible values of the regularity of $S/J_G$ within the intervals $[\ell(G), c(G)]$ and $[\ell(G), |V(G)|-\omega(G)+1]$ defined by the graph-dependent lower and upper bounds. We show in Theorem~\ref{2lrcthrm} that for any given values $2 \leq \ell \leq r \leq c$, there exists a graph $G$ such that $\ell = \ell(G)$, $r = \reg(S/J_G)$ and $c = c(G)$. Furthermore, we prove in Theorem~\ref{3lrwthrm} that for any prescribed values $3 \leq \ell \leq r \leq \overline{\omega}$, there exists a connected graph $G$ satisfying the equations $\ell = \ell(G)$, $r=\reg(S/J_G)$ and $\overline{\omega} = |V(G)| - \omega(G) +1$.
    

    \section{Preliminaries}

    \noindent \subsection{Graph Theory Background} Let $G$ be a finite simple graph with vertex set $V(G)$ and edge set $E(G)$. A sequence $P : v_0,v_1,\ldots,v_\ell$ is called a \textit{path} of length $\ell$ in $G$ if $v_0,v_1,\ldots,v_\ell$ are distinct vertices of $V(G)$ and $\{v_{i-1},v_{i}\} \in E(G)$ for all $1 \leq i \leq \ell$. A \textit{path graph} $P_{\ell+1}$ of length $\ell$ is the graph consisting of exactly such a sequence of $\ell+1$ vertices and $\ell$ edges. A \textit{cycle} $C$ of length $m$ in $G$ is a sequence $C : u_1,\ldots,u_m,u_1$ where $u_1,\ldots,u_m$ are distinct vertices of $G$ and $\{u_1,u_2\},\ldots,\{u_{m-1},u_m\},\{u_1,u_m\} \in E(G)$. A \textit{cycle graph} of length $m$ is usually denoted by $C_m$. A graph is said to be \textit{connected} if there is a path between every pair of its vertices; otherwise, it is said to be \textit{disconnected}.

    Let $G$ be a graph and $T \subseteq V(G)$. The \textit{induced subgraph} of $G$ on the vertex set $T$ is denoted by $G[T]$ and defined as the graph whose vertex set is $T$ and whose edge set consists of all edges in $E(G)$ that have both endpoints in $T$. A graph $G$ is called \textit{chordal} if $G$ does not contain $C_m$ as an induced subgraph for all $m \geq 4$. We denote the length of a longest induced path of a connected graph $G$ by $\ell(G)$. For any graph $G$, we write $\ell(G) = \ell (G _1) + \cdots + \ell (G_t)$ where $G_1,\ldots,G_t$ are the connected components of $G$.

    A \textit{complete graph} $K_n$ is a graph with $n$ vertices such that every pair of its vertices are adjacent. A \textit{clique} $F$ of a graph $G$ is a subset of $V(G)$ such that the induced subgraph $G[F]$ of $G$ is isomorphic to a complete graph. A \textit{maximal clique} is a clique that is not contained by another clique. A vertex $v$ is said to be a \textit{simplicial vertex} if it belongs to exactly one maximal clique.

    The \textit{union} of two graphs $G_1$ and $G_2$ is denoted by $G_1 \cup G_2$ and defined as the graph with the vertex set $V(G_1 \cup G_2) = V(G_1) \cup V(G_2)$ and edge set $E(G_1 \cup G_2) = E(G_1) \cup E(G_2)$. The \textit{disjoint union} of $G_1$ and $G_2$ is denoted by $G_1 \sqcup G_2$ and defined as the union of $G_1$ and $G_2$ with disjoint vertex sets.

    \begin{definition} \cite{IGT}
        Let $\mathcal{S} = \{S_1,\ldots,S_n\}$ be a collection of sets. The \textit{intersection graph} $G$ on $\mathcal{S}$ is defined as the graph with the set of vertices $V(G)=\{v_1,\ldots,v_n\}$ where each $v_i$ corresponds to the set $S_i$, and with the set of edges $E(G)=\{ \{v_i,v_j\} : i \neq j, \text{ } S_i \cap S_j \neq \emptyset \}$.
    \end{definition}

    Note that we can directly consider the vertices of the intersection graph as the family of sets that define it, that is, $V(G)=\{S_1,\ldots,S_n\}$ and two vertices of $G$ are adjacent if and only if their intersection is nonempty.
    
    \begin{remark}
        It is well-known that every graph can be expressed as an intersection graph. So, there are relations between a graph and the sets that define the intersection graph that represents it. In  this way, intersection graphs can be used to characterize some graph classes. For example, Gavril showed in \cite{TIGSTECG} that a graph is chordal if and only if it is the intersection graph of a collection of subtrees of a tree.
    \end{remark}
    
    \noindent \subsection{Algebra Background} Let $K$ be a field and $R=K[x_1,...,x_n]$. Let $I \subsetneq R$ be an ideal which can be generated by homogeneous elemets of $R$. The \textit{minimal graded free resolution} of $R/I$ has the form
     $$
		\mathbb{F}: 0 \rightarrow \bigoplus_{j \in \mathbb{N}} R(-j)^{\beta_{p,j}} \stackrel{d_p}{\longrightarrow} \cdots \rightarrow \bigoplus_{j \in \mathbb{N}} R(-j)^{\beta_{1,j}} \stackrel{d_1}{\longrightarrow} R \stackrel{d_0}{\longrightarrow} R/I \rightarrow 0
    $$
    where $R(-j)$ is the polynomial ring obtained by shifting the degrees of $R$ by $j$.
    
    Note that, only finite number of the numbers $\beta_{i,j}$ in $\mathbb{F}$ are nonzero. The numbers $\beta_{i,j}$ (or $\beta_{i,j}(R/I)$) are called the \textit{graded Betti numbers} of $R/I$. Also, the \textit{Castelnuovo-Mumford regularity} (or simply \textit{regularity}) of $R/I$ is defined as
    $$\reg(R/I)=\max \{ j : \beta_{i,i+j}(R/I) \neq 0 \}.$$

    Various lower and upper bounds for the regularity of binomial edge ideals have been found. Also, the regularity of binomial edge ideals of some special graph classes have been calculated.
    
    The following theorem is an immediate consequence of \cite[Theorem 2.1]{BEIOG} which allows to compute the regularity of binomial edge ideals of complete graphs.

    \begin{theorem}\label{completereg}
        $\reg(S/J_{K_n}) = 1$ for any $n \geq 2$.
    \end{theorem}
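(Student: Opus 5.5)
The cleanest approach is to use the fact that $J_{K_n}$ is the ideal $I_2(X)$ of $2$-minors of the generic $2\times n$ matrix
\[
X=\begin{pmatrix} x_1 & \cdots & x_n\\ y_1 & \cdots & y_n\end{pmatrix}.
\]
Its minimal free resolution is the Eagon--Northcott complex, which is linear. This is exactly what \cite[Theorem 2.1]{BEIOG} records for complete graphs. Concretely, $J_{K_n}$ has a $2$-linear resolution, with
\[
\beta_{i,i+1}(S/J_{K_n})=i\binom{n}{i+1}\quad\text{for } 1\le i\le n-1,
\]
and all other $\beta_{i,j}$ with $i\ge 1$ vanish. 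We also have $\beta_{0,0}=1$.

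Given this, the regularity follows directly from the definition. We need $\max\{j:\beta_{i,i+j}\neq 0\}$. The value $j=0$ occurs only at $i=0$. For $i\ge1$ the only nonzero shifts are $i+1$, giving $j=1$. Since $n\ge 2$, we have $\beta_{1,2}=\binom{n}{2}\neq 0$, as there is at least one generator $f_{12}$. Hence the maximum is $1$.

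If one prefers to avoid quoting the Eagon--Northcott resolution, there is an alternative route. Use a lexicographic Gröbner basis: the $2$-minors form a quadratic Gröbner basis whose initial ideal is $(x_iy_j : i<j)$. This initial ideal has linear quotients, so it has a linear resolution, and reg of $S/\mathrm{in}(J)$ equals $1$. Then $\reg(S/J_{K_n})\le 1$ by upper semicontinuity. Combined with $J_{K_n}\neq 0$ being generated in degree $2$, which gives $\reg\ge1$, this yields equality.

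The only step that needs any care is the linear-quotients check in this alternative route. Order the generators $x_iy_j$ lexicographically. The colon ideals are then generated by variables. The first route is immediate from the cited result.
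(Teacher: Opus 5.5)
Your first route is essentially the paper's own argument. The paper gives no separate proof and simply invokes \cite[Theorem 2.1]{BEIOG}, namely that $J_{K_n}$ (the ideal of $2$-minors of the generic $2\times n$ matrix) has a linear resolution, and reads off $\reg(S/J_{K_n})=1$ exactly as you do. Your alternative Gr\"obner-degeneration argument (initial ideal $(x_iy_j : i<j)$ with linear quotients, then $\reg(S/J_{K_n})\le \reg(S/\mathrm{in}(J_{K_n}))=1$) is also correct and makes the statement self-contained, though the paper does not need it.
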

    
    In \cite{RBBEI}, Matsuda and Murai found the following important results regarding the regularity of binomial edge ideals. They first determined the regularity of binomial edge ideals of path graphs as follows. 

    \begin{proposition}\cite{RBBEI} \label{pathreg}
         If $P$ is a path graph of length $\ell$, then $\reg(S/J_P) = \ell$.
    \end{proposition}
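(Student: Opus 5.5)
The plan is to compute $\reg(S/J_P)$ from above and from below, and to let the two bounds meet at $\ell$. I label the vertices of the path $1,2,\ldots,\ell+1$ with edges $\{i,i+1\}$ for $1\le i\le \ell$, so that
\[
J_P=(f_{12},f_{23},\ldots,f_{\ell,\ell+1}).
\]

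For the upper bound, I will show that $J_P$ is a complete intersection. With respect to the lexicographic order induced by $x_1>\cdots>x_{\ell+1}>y_1>\cdots>y_{\ell+1}$, the initial term of $f_{i,i+1}$ is $x_iy_{i+1}$. These $\ell$ monomials are pairwise coprime, because each variable $x_i$ and each variable $y_{i+1}$ occurs in exactly one of them. By Buchberger's criterion, the generators $f_{i,i+1}$ therefore form a Gröbner basis. Moreover, $\mathrm{in}(J_P)=(x_1y_2,\ldots,x_\ell y_{\ell+1})$ is generated by a regular sequence of monomials, so it has height $\ell$. Since $J_P$ and $\mathrm{in}(J_P)$ have the same height, $J_P$ has height $\ell$. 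It is generated by $\ell$ homogeneous quadrics, so the $f_{i,i+1}$ form a regular sequence.

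The Koszul complex on these $\ell$ quadrics then gives the minimal graded free resolution of $S/J_P$. Its $i$-th module is $S(-2i)^{\binom{\ell}{i}}$, for $0\le i\le \ell$. Hence $\beta_{i,2i}(S/J_P)=\binom{\ell}{i}$ and no other graded Betti numbers are nonzero. Reading off the definition of regularity, $\reg(S/J_P)=\max_i(2i-i)=\ell$, attained at $i=\ell$.

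This argument already determines the regularity exactly, so no separate lower bound is needed. Alternatively, one could cite the general bound $\ell(G)\le\reg(S/J_G)$ of Matsuda and Murai recalled in the introduction, but I prefer the self-contained computation. A different route would be induction on $\ell$, using $J_{P_{\ell+1}}=J_{P_\ell}+(f_{\ell,\ell+1})$ with $f_{\ell,\ell+1}$ a nonzerodivisor of degree $2$ on $S/J_{P_\ell}$, which adds exactly $1$ to the regularity. This, however, needs the same regular-sequence fact.

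The main obstacle is verifying that the binomials $f_{i,i+1}$ form a regular sequence. I handle it through the initial ideal, relying on the standard fact that a Gröbner deformation preserves height. The one point to check carefully is that the chosen term order really gives leading terms $x_iy_{i+1}$ rather than $x_{i+1}y_i$; this holds since $x_i>x_{i+1}$ in the order.
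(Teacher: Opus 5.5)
Your argument is correct: the pairwise coprime leading terms $x_iy_{i+1}$ make the generators a Gröbner basis with $\mathrm{in}(J_P)$ of height $\ell$, so $J_P$ is a complete intersection of $\ell$ quadrics, and the Koszul complex then gives $\reg(S/J_P)=\ell$. The paper gives no proof of its own and simply cites Matsuda--Murai, where, as far as I recall, the result rests on this same complete-intersection property of paths; you were also right not to use the lower bound of Theorem~\ref{lowerupperbreg} here, since that bound is deduced from the path case together with Theorem~\ref{indsubgraphreg} and so would make the argument circular.
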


    In the same work, the authors give a relation between the regularity of the binomial edge ideals of a graph and its induced subgraphs.

    \begin{theorem}\cite{RBBEI} \label{indsubgraphreg}
        Let $G$ be a graph and let $H$ be an induced subgraph of $G$. Then, $\reg(S/J_H) \leq \reg(S/J_G)$.
    \end{theorem}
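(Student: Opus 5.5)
The plan is to reduce the statement to a comparison of Betti numbers through a Gröbner degeneration, and then use the fact that induced subgraphs are retracts at the level of initial ideals. We fix the lexicographic order on $S$ induced by $x_1>\cdots>x_n>y_1>\cdots>y_n$. By the theorem of Herzog et al.\ \cite{BEICIS}, the reduced Gröbner basis of $J_G$ consists of the binomials $u_\pi f_{ij}$ attached to admissible paths $\pi$ from $i$ to $j$ in $G$. It follows that $\mathrm{in}_<(J_G)$ is a squarefree monomial ideal. By Conca--Varbaro, for ideals with squarefree initial ideal one has $\reg(S/J_G)=\reg(S/\mathrm{in}_<(J_G))$, so it suffices to prove the inequality for the initial ideals. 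Alternatively, one can avoid Conca--Varbaro and argue directly with $J_G$ via a retraction, as explained below.

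The key step is to show that if $H=G[T]$ is an induced subgraph, then $J_H=J_G\cap S_T$ and, more strongly, $S_T/J_H$ is an algebra retract of $S/J_G$. Here $S_T=K[x_i,y_i : i\in T]$. We define $\varphi\colon S\to S_T$ by sending $x_i,y_i$ to themselves for $i\in T$ and to $0$ for $i\notin T$. Each generator $f_{ij}$ of $J_G$ maps either to $f_{ij}$, when $i,j\in T$, in which case $\{i,j\}\in E(H)$ because $H$ is induced, or to $0$. Hence $\varphi(J_G)\subseteq J_H$. Conversely $J_H\subseteq J_G$, since every edge of $H$ is an edge of $G$. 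The inclusion $S_T\hookrightarrow S$ followed by $\varphi$ is the identity on $S_T$. Therefore $S_T/J_H\to S/J_G\to S_T/J_H$ composes to the identity, and $S_T/J_H$ is a graded algebra retract of $S/J_G$.

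We then invoke the standard fact (Ohsugi--Herzog--Hibi) that for a graded algebra retract $A$ of $B$ one has $\beta_{i,j}^{S_T}(A)\le\beta_{i,j}^{S}(B)$. This implies $\reg A\le\reg B$, which gives $\reg(S_T/J_H)\le\reg(S/J_G)$. The regularity is unchanged when we extend scalars to the full ring $S$ by adjoining the remaining variables, since $S$ is flat over $S_T$. This proves the statement.

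The main obstacle is making the Betti number comparison for retracts rigorous. If we prefer a self-contained route, we note that for a squarefree monomial ideal $I=\mathrm{in}_<(J_G)$, restricting to the variables indexed by $T$ gives exactly $\mathrm{in}_<(J_H)$. This holds because admissible paths in $H$ are precisely the admissible paths in $G$ whose vertices all lie in $T$, using again that $H$ is induced. Hochster's formula then identifies the multigraded Betti numbers of this restriction with those of $S/I$ in multidegrees supported on $T$. This gives $\beta_{i,j}(S/\mathrm{in}(J_H))\le\beta_{i,j}(S/\mathrm{in}(J_G))$, and Conca--Varbaro transfers the inequality back to $J_G$ and $J_H$.
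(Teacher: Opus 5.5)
The paper does not prove this statement; it quotes it from Matsuda--Murai, so there is no internal argument to compare with. Your retraction argument is a correct and complete proof, and it is the standard one.

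The ring maps $S_T\hookrightarrow S\xrightarrow{\varphi}S_T$ compose to the identity. The inclusion carries $J_H$ into $J_G$, and $\varphi$ carries $J_G$ onto $J_H$. The second fact is the only place where inducedness is used, and you use it correctly.

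An equivalent formulation avoids the word ``retract''. Grade $S$ by $\mathbb{Z}^n$ via $\deg x_i=\deg y_i=\mathbf{e}_i$. Restricting a multigraded minimal free resolution of $S/J_G$ to multidegrees supported on $T$ gives a minimal free resolution of $S_T/(J_G\cap S_T)=S_T/J_H$ over $S_T$. So in fact $\beta_{i,\mathbf a}(S_T/J_H)=\beta_{i,\mathbf a}(S/J_G)$ for every such $\mathbf a$.

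\textbf{The step you call the main obstacle is routine.} Your retraction is already defined on the polynomial rings. Functoriality of $\mathrm{Tor}$ in the pair (ring, module) then shows that the composite
$\mathrm{Tor}^{S_T}_i(S_T/J_H,K)_j\to\mathrm{Tor}^{S}_i(S/J_G,K)_j\to\mathrm{Tor}^{S_T}_i(S_T/J_H,K)_j$
is induced by the identity, hence is the identity. So the first map is injective, and $\beta_{i,j}(S_T/J_H)\le\beta_{i,j}(S/J_G)$. The compatibility of the lift with the presentations is exactly what makes this work. It is good that you wrote the maps down on $S$ and $S_T$, not only on the quotients.

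\textbf{The Gr\"obner/Hochster route is unnecessary and is not ``self-contained''.} It depends on the Conca--Varbaro theorem, which is far deeper than the statement being proved. That theorem also equates regularities, not graded Betti numbers: already $\beta_{1,3}$ can differ, since $\mathrm{in}_<(J_G)$ may need cubic minimal generators while $J_G$ is generated by quadrics. Your last sentence can therefore only transfer the inequality of regularities. That is all you need, but the sentence should say so. On the $H$ side you would in any case need only the elementary inequality $\reg(S_T/J_H)\le\reg(S_T/\mathrm{in}_<(J_H))$; Conca--Varbaro is needed only for $G$.
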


    In addition to Propositions \ref{pathreg} and Theorem \ref{indsubgraphreg}, the authors of \cite{RBBEI} give lower and upper bounds for the regularity of binomial edge ideals in the following theorem.
    
    \begin{theorem}\cite[Theorem 1.1]{RBBEI} \label{lowerupperbreg} 
            Let $G$ be a graph on $[n]$. Then, $$\ell (G) \leq \reg(S/J_G) \leq n-1.$$
    \end{theorem}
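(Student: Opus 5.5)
The plan is to prove the two inequalities separately, reducing first to the connected case. Write $G = G_1 \sqcup \cdots \sqcup G_t$ for the decomposition into connected components. Then $J_G = J_{G_1} + \cdots + J_{G_t}$, and the summands live in polynomial rings on pairwise disjoint sets of variables. Hence $S/J_G$ is the tensor product over $K$ of the rings $S_i/J_{G_i}$, and regularity is additive under such tensor products. This gives
\[
\reg(S/J_G)=\sum_{i=1}^t \reg(S_i/J_{G_i}).
\]
Since $\ell(G)=\sum_i \ell(G_i)$ by definition, and $\sum_i (|V(G_i)|-1)\le n-1$, both bounds follow once they are established for each connected component. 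So assume from now on that $G$ is connected.

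\emph{Lower bound.} Let $P$ be a longest induced path of $G$; it has length $\ell(G)$. Since $P$ is an induced subgraph of $G$, Theorem~\ref{indsubgraphreg} gives $\reg(S/J_P)\le \reg(S/J_G)$. Proposition~\ref{pathreg} gives $\reg(S/J_P)=\ell(G)$, so $\ell(G)\le\reg(S/J_G)$.

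\emph{Upper bound.} I would argue by induction on the number of edges that are missing from $G$, that is, on how far $G$ is from being complete. If $G=K_n$, Theorem~\ref{completereg} gives regularity $1\le n-1$. Otherwise, pick a vertex $v$ that is not simplicial, and use the Ohtani-type exact sequence. Let $G_v$ be the graph obtained from $G$ by making $N_G(v)$ a clique, and let $G\setminus v$ be $G$ with $v$ deleted. Then
\[
0\to S/J_G \to S/J_{G_v}\oplus S/\bigl((x_v,y_v)+J_{G\setminus v}\bigr)\to S/\bigl((x_v,y_v)+J_{G_v\setminus v}\bigr)\to 0 .
\]
From this sequence,
\[
\reg(S/J_G)\le \max\bigl\{\reg(S/J_{G_v}),\ \reg(S/J_{G\setminus v}),\ \reg(S/J_{G_v\setminus v})+1\bigr\}.
\]
Each of the three graphs on the right is handled by induction. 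The graph $G_v$ has strictly more edges on the same $n$ vertices. The graphs $G\setminus v$ and $G_v\setminus v$ have $n-1$ vertices, and their components each have at most $n-1$ vertices. Applying the bound to each component, the first two terms are at most $n-1$, and $\reg(S/J_{G_v\setminus v})+1\le (n-2)+1=n-1$. This is why the induction should run on the pair (number of vertices, number of non-edges), ordered lexicographically.

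The main obstacle is establishing this exact sequence, and in particular the prime decomposition identity $J_G=J_{G_v}\cap\bigl((x_v,y_v)+J_{G\setminus v}\bigr)$ that underlies it. I would prove that identity from the description of the minimal primes $P_T(G)$ via cut sets: split according to whether $v\in T$ or $v\notin T$, where $v\notin T$ forces $N(v)$ into a single component. Alternatively, since the sequence is a known result of Ohtani, I would simply cite it.
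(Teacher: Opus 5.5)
The paper gives no proof of this statement; it is quoted from Matsuda--Murai \cite{RBBEI}, so there is no internal argument to compare yours against. Your proof is correct, and it uses exactly the ingredients the paper itself records. The lower bound follows immediately from Theorem~\ref{indsubgraphreg} and Proposition~\ref{pathreg} applied to a longest induced path. The upper bound is an induction driven by the sequence~\eqref{exactseq} and inequality~\eqref{regmaxineq}, which the paper later uses in Section~\ref{subsec:CL}. Your lexicographic order on (number of vertices, number of non-edges) is the right choice. The graph $G_v$ keeps all $n$ vertices but gains an edge. The graph $G\setminus v$ may have exactly as many non-edges as $G$ (when $v$ is adjacent to every other vertex) and only loses a vertex.

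Two small points need tidying.

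First, the base case must include $K_1$, whose regularity is $0$. Theorem~\ref{completereg} requires $n\ge 2$, and ``$1\le n-1$'' is false for $n=1$. This case is actually reached, because components of $G\setminus v$ or $G_v\setminus v$ can be isolated vertices.

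Second, the dichotomy ``$G$ is complete or has a non-simplicial vertex'' holds only for connected $G$; a disjoint union of two cliques is a counterexample otherwise. It deserves a line of justification. If every vertex is simplicial, two adjacent vertices share their unique maximal clique. Hence all vertices along any path have the same maximal clique, and a connected such graph is a clique. Since you run the induction on connected graphs and treat the possibly disconnected $G\setminus v$ and $G_v\setminus v$ componentwise via additivity, the argument then closes.

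Your sketch of Ohtani's identity through the primes $P_T(G)$ is also sound. Read ``$N(v)\setminus T$'' for ``$N(v)$''. You also need $J_{G_v}+(x_v,y_v)+J_{G\setminus v}=(x_v,y_v)+J_{G_v\setminus v}$ to identify the third term of the sequence, but this is immediate. In fact the intersection identity holds for every vertex $v$; non-simpliciality is needed only so that $G_v\neq G$ and the induction makes progress.
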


    While Theorem \ref{lowerupperbreg} provides an upper bound for $\reg(S/J_G)$, another upper bound was later obtained by Malayeri et al. in \cite{PCRBEI} as follows.
    
    \begin{theorem} \cite[Corollary 2.8]{PCRBEI} \label{numberofmc}
            For any graph $G$, $\reg (S/J_G) \leq c(G)$ where $c(G)$ is the number of the maximal cliques of $G$.
    \end{theorem}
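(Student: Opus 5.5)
The approach is induction on the number $c(G)$ of maximal cliques, using the short exact sequence attached to a non-simplicial vertex (Ohtani's lemma). The tools are Theorem~\ref{completereg} for the base case and additivity of regularity over disjoint unions.

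\textbf{Reductions.} If $G=G_1\sqcup G_2$, then $S/J_G$ is a tensor product of $S_1/J_{G_1}$ and $S_2/J_{G_2}$ over disjoint sets of variables. Hence $\reg(S/J_G)=\reg(S_1/J_{G_1})+\reg(S_2/J_{G_2})$, and $c(G)=c(G_1)+c(G_2)$, so we may assume $G$ is connected. Isolated vertices contribute $0$ to the regularity and $1$ to $c(G)$, so they are harmless.

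\textbf{Base case.} If every vertex of $G$ is simplicial, then each connected component is a complete graph. In that case $\reg(S/J_G)$ equals the number of non-trivial components, by Theorem~\ref{completereg} and additivity, and this is at most $c(G)$.

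\textbf{Inductive step.} Otherwise choose a non-simplicial vertex $v$. Let $G_v$ be the graph obtained by making $N_G(v)$ a clique, and let $G\setminus v$ be the deletion. Ohtani's lemma gives
\[
J_G=J_{G_v}\cap\bigl((x_v,y_v)+J_{G\setminus v}\bigr),
\]
which yields the exact sequence
\[
0\to S/J_G\to S/J_{G_v}\oplus S/\bigl((x_v,y_v)+J_{G\setminus v}\bigr)\to S/\bigl((x_v,y_v)+J_{G_v\setminus v}\bigr)\to 0 .
\]
Regularity along this sequence gives
\[
\reg(S/J_G)\le\max\{\reg(S/J_{G_v}),\ \reg(S/J_{G\setminus v}),\ \reg(S/J_{G_v\setminus v})+1\}.
\]
It remains to bound each of the three terms by $c(G)$ using induction.

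\textbf{Counting cliques.}
\begin{itemize}
\item In $G_v$, the cliques of $G$ containing $v$ (at least two, since $v$ is not simplicial) merge into the single clique $N_G[v]$. Other maximal cliques survive or get absorbed, so $c(G_v)\le c(G)-1$.
\item In $G_v\setminus v$, the vertex set $N_G(v)$ is a clique, and every maximal clique of $G_v\setminus v$ is either $N_G(v)$ or a maximal clique of $G_v$ not containing $v$. Hence $c(G_v\setminus v)\le c(G_v)\le c(G)-1$, so the third term is at most $c(G)$.
\item For $G\setminus v$, each maximal clique $F\ni v$ of $G$ becomes $F\setminus v$, which may fail to be maximal, and no new maximal cliques arise. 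So $c(G\setminus v)\le c(G)$.
\end{itemize}
Since $G\setminus v$ has fewer vertices, the induction should run on $|V(G)|$ rather than on $c(G)$. Then $G\setminus v$ and $G_v\setminus v$ are covered by the induction hypothesis, but $G_v$ has the same vertex set as $G$, so it is not.

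\textbf{Main obstacle.} The hard part is handling $G_v$. I would use a lexicographic induction on the pair $(c(G),|V(G)|)$. Every graph appearing in the bound then satisfies one of the following:
\begin{itemize}
\item $G_v$: strictly smaller $c$;
\item $G_v\setminus v$: strictly smaller $c$ (and fewer vertices);
\item $G\setminus v$: $c$ not larger and strictly fewer vertices.
\end{itemize}
This makes the induction well founded. Combining the three bounds gives $\reg(S/J_G)\le c(G)$.

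The delicate points are these. First, the claim $c(G\setminus v)\le c(G)$ needs a check that deleting $v$ creates no new maximal clique; this holds because every clique of $G\setminus v$ is a clique of $G$. Second, the $+1$ shift in the third term must be exactly absorbed by $c(G_v\setminus v)\le c(G)-1$, which is where non-simpliciality of $v$ is essential.
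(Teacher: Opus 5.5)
There is a genuine gap in the first bullet of your clique count. You claim that, apart from the cliques through $v$ merging into $N_G[v]$, the other maximal cliques ``survive or get absorbed.'' This is false. Making $N_G(v)$ complete can create new maximal cliques $A\cup B$, where $A\subseteq N_G(v)$ is not a clique of $G$ and $\emptyset\neq B\subseteq V(G)\setminus N_G[v]$, while the old cliques $\{a\}\cup B$ were already inside larger maximal cliques that persist in $G_v$. For instance, let $v$ be adjacent to two non-adjacent vertices $a,b$, and for $i=1,2$ add vertices $x_i,y_i,z_i$ with edges $ax_i,bx_i,ay_i,x_iy_i,bz_i,x_iz_i$. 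The maximal cliques of $G$ are $\{v,a\},\{v,b\},\{a,x_i,y_i\},\{b,x_i,z_i\}$, so $c(G)=6$. Those of $G_v$ are $\{v,a,b\},\{a,b,x_i\},\{a,x_i,y_i\},\{b,x_i,z_i\}$, so $c(G_v)=7$, and $c(G_v\setminus v)=6$. The induction hypothesis then only gives $\reg(S/J_{G_v})\le 7$ and $\reg(S/J_{G_v\setminus v})+1\le 7$, not $\le c(G)=6$. With $m$ such gadgets, $c(G)=2m+2$ and $c(G_v)=3m+1$, so the discrepancy is unbounded. Your lexicographic induction on $(c(G),|V(G)|)$ is also not well founded, since $G_v$ has the same vertex set and need not have fewer maximal cliques. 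Your bounds $c(G\setminus v)\le c(G)$ and $c(G_v\setminus v)\le c(G_v)$ are correct. The failure lies entirely in the step $G\mapsto G_v$, and it cannot be fixed by changing the induction order: $c$ is simply not monotone under this operation.

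The paper only quotes this bound. The cited source proves the stronger bound $\reg(S/J_G)\le\eta(G)$, where $\eta(G)$ is the maximum size of a set of edges no two of which lie in a common clique of $G$. Your scheme goes through with $\eta$ in place of $c$:
\begin{itemize}
\item $\eta(G\setminus v)\le\eta(G)$, because cliques of an induced subgraph are cliques of $G$.
\item For $G_v$ (resp.\ $G_v\setminus v$), at most one edge of a clique-disjoint set lies in the clique $N_G[v]$ (resp.\ $N_G(v)$). All remaining edges are edges of $G$ not contained in $N_G[v]$. Choose $a,b\in N_G(v)$ non-adjacent in $G$, which is possible since $v$ is not simplicial. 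Replace that edge by $\{v,a\}$ (resp.\ by $\{v,a\},\{v,b\}$, or simply adjoin $\{v,a\}$ if no such edge exists). The result is clique-disjoint in $G$, since a clique of $G$ containing $\{v,a\}$ and a remaining edge $e$ would force $e\subseteq N_G[v]$. This gives $\eta(G_v)\le\eta(G)$ and $\eta(G_v\setminus v)\le\eta(G)-1$; the second inequality is where non-simpliciality is used.
\end{itemize}
Then induct on the number of non-simplicial vertices, which strictly drops for $G_v$, $G\setminus v$ and $G_v\setminus v$. In the base case, a disjoint union of complete graphs, $\reg(S/J_G)=\eta(G)$. This gives $\reg(S/J_G)\le\eta(G)$. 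Finally $\eta(G)\le c(G)$, because distinct edges of a clique-disjoint set lie in distinct maximal cliques.
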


    As noted in \cite[Remark 1.3]{TSBTBEI}, for any disconnected graph, the regularity of its binomial edge ideal can be expressed in terms of its connected components as follows.

    \begin{remark} \label{regscc}
        Let $G$ be a graph, and let $G_1,\ldots,G_t$ be its connected components. Then, $$\reg(S/J_G)= \sum_{i=1}^t \reg(S_i/J_{G_i})$$ where $S_i=K[x_v,y_v: v \in V(G_i)]$ for all $1 \leq i \leq t$.
    \end{remark}
  
    \section{CL-Graphs}

    In this section, we characterize all graphs $G$ that satisfy $\ell(G)=c(G)$. The authors of \cite{RBEICG} investigated when the condition $\ell(G) = c(G)$ is satisfied for a chordal graph $G$. For this purpose, they introduced strongly interval graphs as follows.

    \begin{definition}\cite[Definition 4.1]{RBEICG} \label{strintgra}
        Let $\ell \in \mathbb{N}$ and $r \in \mathbb{N}_0 = \mathbb{N} \cup \{ 0 \}$. Also, let $J_0$, $J_1$, $\ldots$ , $J_\ell$, $I_1$, $\ldots$ , $I_r$ be subsets of $\mathbb{R}$ satisfying the following properties; 
        
        \begin{enumerate}[label=\roman*)]
		\item $J_0=[0] = \{ 0 \}$ and $J_j=[j-1,j]$ for $1 \leq j \leq \ell$.
  
		\item For each $1 \leq i \leq r$, $I_i=[a_i,b_i]$ where $a_i \in \mathbb{N}_0$ and $a_i<b_i< \ell$.
        
	\end{enumerate}
        The intersection graph on $\{J_0,J_1,\ldots,J_\ell,I_1,\ldots,I_r\}$ is called a \textit{connected strongly interval graph}. Also, if every connected component of a graph is isomorphic to a connected strongly interval graph, it is called a \textit{strongly interval graph}.
    \end{definition}

    \begin{remark}
        Strongly interval graphs constitute a subclass of interval graphs, and since all interval graphs are known to be chordal, strongly interval graphs are also chordal graphs.
    \end{remark}

    Based on Definition \ref{strintgra}, the following result was established in \cite{RBEICG}.
    
    \begin{theorem} \cite[Corollary 4.3]{RBEICG} \label{sigtheorem}
        Let $G$ be a chordal graph. Then, the followings are equivalent; 
        \begin{enumerate}[label=(\alph*)]
            \item $\reg(S/J_G) = \ell (G) = c(G)$.
            \item $G$ is a strongly interval graph.
        \end{enumerate}
    \end{theorem}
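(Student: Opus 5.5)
Since $\ell(G)\le\reg(S/J_G)\le c(G)$ holds for every graph by Theorem~\ref{lowerupperbreg} and Theorem~\ref{numberofmc}, condition (a) is equivalent to the purely combinatorial equality $\ell(G)=c(G)$. Moreover, both $\ell$ and $c$ are additive over connected components, and for each component $G_i$ we have $\ell(G_i)\le c(G_i)$. Hence $\ell(G)=c(G)$ holds if and only if $\ell(G_i)=c(G_i)$ for every component. So I would reduce the theorem to the following claim: a connected chordal graph $G$ satisfies $\ell(G)=c(G)$ if and only if $G$ is a connected strongly interval graph.

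For (b)$\Rightarrow$(a), I would take the representation $J_0,\dots,J_\ell,I_1,\dots,I_r$. The vertices $J_0,\dots,J_\ell$ induce a path of length $\ell$, because $J_{j}\cap J_k\neq\emptyset$ exactly when $|j-k|\le1$. So $\ell(G)\ge\ell$. Next I would count maximal cliques. In an interval graph, every maximal clique is the set of intervals containing some common point (Helly property). Each such point may be taken to be an integer $p\in\{0,\dots,\ell-1\}$, or a point in some $(j-1,j)$, since every interval has integer left endpoint and $b_i<\ell$.

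To bound the number of cliques, I would argue as follows. The clique at a non-integer point of $(j-1,j)$ contains $J_j$. It is contained in the clique at $j-1$ unless some interval starts in $(j-1,j)$, which is impossible because $a_i\in\mathbb N_0$. It follows that the maximal cliques are among the cliques at the points $0,1,\dots,\ell-1$, together possibly with one at $\ell$. The clique at $\ell$ is just $\{J_\ell\}$ and is contained in the clique at $\ell-1$. This gives $c(G)\le\ell$, hence $c(G)\le\ell\le\ell(G)\le c(G)$, and equality holds throughout.

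For (a)$\Rightarrow$(b), I would fix a longest induced path $P: v_0,\dots,v_\ell$ with $\ell=c(G)$. Its edges $\{v_{j-1},v_j\}$ lie in $\ell$ distinct maximal cliques $C_1,\dots,C_\ell$, distinct because $P$ is induced. Since $c(G)=\ell$, these are all the maximal cliques of $G$. Every other vertex $w$ lies in some of the $C_j$; let $A_w=\{j: w\in C_j\}$.

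The key step, which I expect to be the main obstacle, is to show that each $A_w$ is an interval $\{a+1,\dots,b\}$ of consecutive indices. Suppose $w\in C_j\cap C_k$ with $j<k$ but $w\notin C_m$ for some $j<m<k$. Then $w$ is adjacent to $v_{j}$ and $v_{k-1}$, since $v_{j-1},v_j\in C_j$ and $v_{k-1},v_k\in C_k$. Replacing a segment of $P$ by $w$, or using chordality, yields either an induced cycle of length at least $4$ or a contradiction with the count of cliques. I would handle this via a clique-tree argument for the chordal graph: $C_1,\dots,C_\ell$ with $C_j\cap C_{j+1}\ni v_j$ form a path clique tree, and the set of cliques containing a vertex is connected in any clique tree.

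Once the $A_w$ are intervals, I would assign to $v_j$ the interval $J_j$ and to $w$ the interval $I_w=[a,b]$, where $A_w=\{a+1,\dots,b\}$. If $b=\ell$, then $w$ would be adjacent to $v_\ell$, and the path could be extended or altered to an equally long one. I would normalize by choosing $P$ so that $v_\ell$ is simplicial, which is possible because the endpoint clique $C_\ell$ contains a simplicial vertex, and so ensure $b<\ell$. Finally, I would check that two vertices are adjacent if and only if they share a clique, which happens if and only if their index sets meet, which happens if and only if the closed intervals intersect.
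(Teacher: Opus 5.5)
Your reduction to $\ell(G_i)=c(G_i)$ on each component and your direction (b)$\Rightarrow$(a) are correct. The Helly-point count is a tidy alternative to the explicit list of maximal cliques $F_j$ in the paper's proof of Theorem~\ref{CLgraphtheorem}. The paper only quotes this statement, so that theorem is its closest argument.

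Your direction (a)$\Rightarrow$(b) has the same architecture as the paper's, but your interval model is off by one. With $v_j\mapsto J_j=[j-1,j]$, the clique $C_j\ni v_{j-1},v_j$ corresponds to the point $j-1$, the unique common point of $J_{j-1}$ and $J_j$. So a vertex $w$ with $A_w=\{a+1,\dots,b\}$ needs an interval containing $a,\dots,b-1$ but not $b$.

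Your $I_w=[a,b]$ meets $J_{b+1}=[b,b+1]$, so the intersection graph makes $w$ adjacent to $v_{b+1}$. In $G$ this adjacency is impossible: it would force $\{w,v_b,v_{b+1}\}\subseteq C_{b+1}$, the only maximal clique containing $v_bv_{b+1}$, contradicting $b+1\notin A_w$. Similarly $[a,b]\cap[b,b']=\{b\}$ while $\{a+1,\dots,b\}\cap\{b+1,\dots,b'\}=\emptyset$. So the last equivalence in your final check (``index sets meet iff closed intervals intersect'') fails, and the constructed graph is not isomorphic to $G$.

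The normalization meant to force $b<\ell$ also cannot work. In $G=K_3$ every longest induced path is an edge $v_0v_1$, and the third vertex lies in $C_1=C_\ell$. Hence $b=\ell$ for every choice of $P$, whether or not $v_\ell$ is simplicial.

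The repair is the paper's $-\frac{1}{2}$ device from the proof of Theorem~\ref{CLgraphtheorem}: set $I_w=[a,b-\frac{1}{2}]$. Then:
\begin{itemize}
\item $a\in\mathbb{N}_0$ and $a<b-\frac{1}{2}<\ell$ hold automatically, so no normalization is needed.
\item $I_w\cap J_k\neq\emptyset$ iff $a\le k\le b$ iff $w\sim v_k$, because an edge $wv_k$ lies in $C_k$ or $C_{k+1}$.
\item Since all left endpoints are integers, $I_w\cap I_{w'}\neq\emptyset$ iff $A_w\cap A_{w'}\neq\emptyset$.
\end{itemize}

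Your clique-tree step also needs one more line. The fact $v_j\in C_j\cap C_{j+1}$ alone does not make $C_1,\dots,C_\ell$ a path clique tree. What does work: $v_j$ lies in \emph{exactly} $C_j$ and $C_{j+1}$. So any clique tree of the chordal graph $G$ must contain all $\ell-1$ edges $C_jC_{j+1}$, and is therefore this path.

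Alternatively, argue directly. If $w\in C_j\cap C_k$ with $j<k$, then $w\sim v_j$ and $w\sim v_{k-1}$. Chordality rules out an induced cycle $w,v_s,\dots,v_t,w$ of length at least $4$ between consecutive neighbours of $w$. Hence $w\sim v_{m-1},v_m$, and so $w\in C_m$, for every $j<m<k$.
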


    We now generalize the characterization in Theorem \ref{sigtheorem} by defining a broader class of strongly interval graphs and eliminating the need for the chordal condition. To this end, we introduce CL-graphs.

    \begin{definition} \label{CLgraphdef}
        Let $\ell \in \mathbb{N}$ and $r \in \mathbb{N}_0$. Also, let $J_0,J_1,\ldots,J_\ell,I_1,\ldots,I_r$ be subsets of $\mathbb{R}$ satisfying the following properties;
        \begin{enumerate}[label=\roman*)]
		\item $J_0=[0]$ and $J_j=[j-1,j]$ for $1 \leq j \leq \ell$.
  
		\item For each $1 \leq i \leq r$, there is some $t \in \mathbb{N}$ such that
        \[I_i=[a_1,b_1] \cup [a_2,b_2] \cup \cdots \cup       [a_t,b_t]\]
         for some
        $a_1,a_2,\ldots,a_t \in \mathbb{N}_0$ where
        \[a_1<b_1<a_2<b_2<\cdots<a_t<b_t<\ell\] with $a_{k+1} - b_k > 2$ for all $1 \leq k \leq t-1$.

            \item If $\{I_{i_k}\}_{k=1}^t$ is a subset of $\{I_1,\ldots,I_r \}$ such that $I_{i_p} \cap I_{i_q} \neq \emptyset$ for all $1 \leq p < q \leq t$, then $\bigcap_{k=1}^t I_{i_k} \neq \emptyset$.

            \item If $I_p \cap I_q \neq \emptyset$ and $j \in I_p\setminus I_q$ for some $j \in \mathbb{N}_0$, then: \\
            $j+1 \notin I_p$ implies $j+1 \notin I_q$; \\
            $j-1 \notin I_p$ implies $j-1 \notin I_q$.
	\end{enumerate}
        We call the intersection graph on $\{J_0,J_1,\ldots,J_\ell,I_1,\ldots,I_r\}$ a \textit{connected CL-graph}. Also, if every connected component of a graph is isomorphic to a connected CL-graph, we call such a graph \textit{CL-graph}.
    \end{definition}

    \begin{remark}
        Note that if $r=0$ in Definition \ref{CLgraphdef}, then the CL-graph is isomorphic to $P_{\ell+1}$ for all $\ell \geq 1$.
    \end{remark}

    \begin{example} \label{CLexample}
        Let $J_0=[0]$, $J_1 = [0,1]$, $J_2 = [1,2]$, $\ldots$ , $J_7 = [6,7]$. Also, let $I_1=[1,4.5]$, $I_2=[1,1.5] \cup [5,5.5]$ and $I_3=[3,3.5]$. One can easily check that the intersection graph $G$ on $\{J_0,J_1,J_2,\ldots,J_7,I_1,I_2,I_3\}$ displayed in Figure \ref{CLgraphexample} satisfies the conditions of being a CL-graph. Also, it can be seen that $\ell (G) = c(G)=7$.
        
        \begin{figure}[hbt!]
			\centering
			\begin{tikzpicture}[scale=0.9,auto=left,inner sep=0pt]
				\tikzstyle{every node}=[circle, minimum size=5pt,fill]
				\node (J0) at (0,0) [label=135:$J_0$]{};
				\node (J1) at (1.5,0) [label=135:$J_1$]{};
				\node (J2) at (3,0) [label=45:$J_2$]{};
				\node (J3) at (4.5,0) [label=225:$J_3$]{};
				\node (J4) at (6,0) [label=45:$J_4$]{};
				\node (J5) at (7.5,0) [label=45:$J_5$]{};
				\node (J6) at (9,0) [label=45:$J_6$]{};
				\node (J7) at (10.5,0) [label=45:$J_7$]{};
				\node (I1) at (3,3) [label=90:$I_1$]{};
				\node (I2) at (1.5,-3) [label=270:$I_2$]{};
				\node (I3) at (7.5,-3) [label=270:$I_3$]{};
				\foreach \from/\to in {J0/J1,J1/J2,J2/J3,J3/J4,J4/J5,J5/J6,J6/J7,I1/J1,I1/J2,I1/J3,I1/J4,I1/J5,I2/J1,I2/J2,I2/J5,I2/J6,I3/J3,I3/J4,I1/I2,I1/I3}
				\draw (\from) -- (\to);
			\end{tikzpicture}
                \caption{\label{CLgraphexample} CL-graph $G$ in Example \ref{CLexample}.}
                \end{figure}
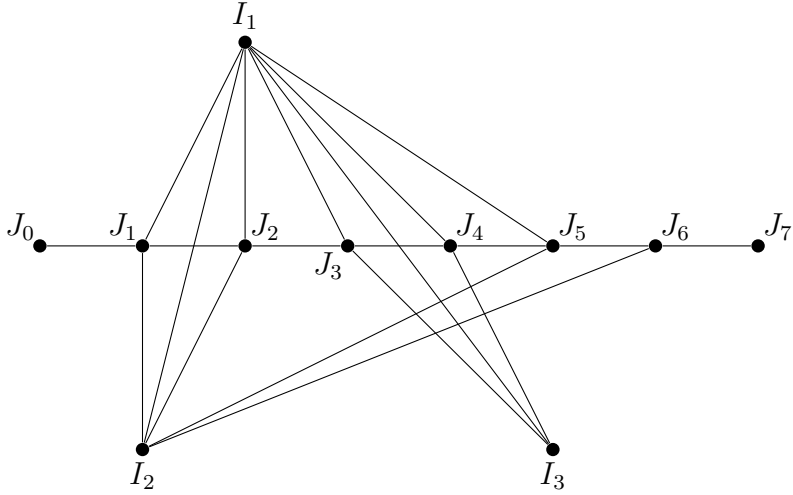
    \end{example}

    Before we proceed to the proof of the main theorem, we show that the class of strongly interval graphs is contained in the class of CL-graphs. We use a basic property of intervals in the real line, which corresponds to the one-dimensional case of the next theorem.

    \begin{theorem}[Helly's Theorem] \cite{LODG}
        Let $d \in \mathbb{N}$ and $\mathcal{C} = \{C_1,\ldots,C_n\}$ be a family of convex subsets of $\mathbb{R}^d$ where $n \geq d+1$. If the intersection of any $d+1$ element of $\mathcal{C}$ is nonempty, then $\bigcap_{i=1}^{n} C_i \neq 0$.
    \end{theorem}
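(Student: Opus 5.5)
The statement is the classical theorem of Helly. I would prove it by induction on $n$, using Radon's lemma as the key tool. Throughout, the conclusion is read as $\bigcap_{i=1}^n C_i \neq \emptyset$; the ``$\neq 0$'' in the statement is a typo.

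\emph{Radon's lemma.} Any $m \geq d+2$ points $x_1,\ldots,x_m \in \mathbb{R}^d$ admit a partition $[m] = I \sqcup J$ such that $\operatorname{conv}\{x_i : i\in I\} \cap \operatorname{conv}\{x_j : j \in J\} \neq \emptyset$. To prove it, consider the vectors $(x_i,1) \in \mathbb{R}^{d+1}$. There are $m \geq d+2$ of them, so they are linearly dependent. Hence there are scalars $\lambda_i$, not all zero, with $\sum \lambda_i x_i = 0$ and $\sum \lambda_i = 0$. Put $I=\{i : \lambda_i>0\}$ and $J$ the complement. Both $I$ and $J$ are nonempty, because the coefficients sum to zero and are not all zero. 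Let $s=\sum_{i\in I}\lambda_i>0$. Then
\[
p=\sum_{i\in I}(\lambda_i/s)\,x_i=\sum_{j\in J}(-\lambda_j/s)\,x_j
\]
is a convex combination on both sides, so $p$ lies in both hulls.

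\emph{Helly, by induction on $n$.} The base case $n=d+1$ is exactly the hypothesis. Now let $n \geq d+2$ and assume the result for families of $n-1$ sets. For each $i$, the subfamily $\{C_j : j\neq i\}$ still has the property that any $d+1$ members intersect, and it has $n-1 \geq d+1$ members. By induction we may therefore choose a point
\[
x_i \in \bigcap_{j \neq i} C_j .
\]
Apply Radon's lemma to $x_1,\ldots,x_n$, obtaining a partition $I \sqcup J$ and a common point $p$ of the two hulls. Fix any $k$.
\begin{itemize}
\item If $k \in J$, then every $x_i$ with $i \in I$ satisfies $i \neq k$, so $x_i \in C_k$. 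By convexity, $p \in \operatorname{conv}\{x_i : i\in I\} \subseteq C_k$.
\item If $k \in I$, the symmetric argument with $J$ in place of $I$ gives $p \in C_k$.
\end{itemize}
Hence $p \in \bigcap_{k=1}^n C_k$.

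Only the case $d=1$ is needed in this paper, and there a direct argument is even simpler. Convex subsets of $\mathbb{R}$ are intervals, and for closed bounded intervals $[a_i,b_i]$ pairwise intersection gives $a_i \leq b_j$ for all $i,j$. Then $\max_i a_i \leq \min_j b_j$, and any point between these two values lies in every interval. The general case needs some care with open or unbounded sets, handled by comparing suprema and infima of the endpoints.

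The only step with real content is Radon's lemma, specifically the sign-splitting of the affine dependence. After that, the induction and the convexity argument are routine bookkeeping.
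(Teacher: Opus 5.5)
Your proof is correct. The sign-splitting of the affine dependence proves Radon's lemma, and the induction that picks $x_i \in \bigcap_{j\neq i} C_j$ and applies Radon to $x_1,\ldots,x_n$ is the standard proof of Helly's theorem, which is what the cited source uses. The paper itself gives no proof and only quotes the result. Your direct argument for closed bounded intervals in $\mathbb{R}$ also covers the only case the paper needs: showing that strongly interval graphs satisfy condition iii) of the CL-graph definition.
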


    \begin{proposition}
        Every strongly interval graph is a CL-graph.
    \end{proposition}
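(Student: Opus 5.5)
It suffices to treat a connected strongly interval graph $G$, given as the intersection graph on $\{J_0,\ldots,J_\ell,I_1,\ldots,I_r\}$ as in Definition~\ref{strintgra}. An arbitrary strongly interval graph has every component isomorphic to such a graph, so it is then a CL-graph by definition. The plan is to reuse the same families: keep the sets $J_0,\ldots,J_\ell$ unchanged and take each $I_i=[a_i,b_i]$ itself as a CL-set with $t=1$. The intersection graph is literally the same graph, so all that remains is to verify conditions i)--iv) of Definition~\ref{CLgraphdef} for these sets.

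Conditions i) and ii) are immediate. For $t=1$, condition ii) only requires $a_1\in\mathbb{N}_0$ and $a_1<b_1<\ell$, which is exactly Definition~\ref{strintgra}(ii). The separation requirement $a_{k+1}-b_k>2$ is vacuous when $t=1$.

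Condition iii) follows from the one-dimensional case $d=1$ of Helly's Theorem. If $I_{i_1},\ldots,I_{i_t}$ are closed intervals that pairwise intersect, then for $t\ge 2$ every $d+1=2$ of them meet, so $\bigcap_k I_{i_k}\neq\emptyset$. The case $t=1$ is trivial since each interval is nonempty. Alternatively, one can argue directly: $\max_k a_{i_k}\le \min_k b_{i_k}$ holds because each pair intersects.

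Condition iv) is the only step requiring care, and I expect it to be the main, though still small, obstacle. Suppose $I_p\cap I_q\neq\emptyset$, $j\in\mathbb{N}_0$, $j\in I_p\setminus I_q$, and $j+1\notin I_p$. Since $I_p=[a_p,b_p]$ contains $j$ but not $j+1$, we have $j\le b_p<j+1$. Since $j\notin I_q$, either $j<a_q$ or $j>b_q$.

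\begin{itemize}
\item If $j<a_q$, then $a_q\ge j+1>b_p$ because $a_q$ is an integer, so $I_p\cap I_q=\emptyset$, a contradiction.
\item Hence $j>b_q$, so $j+1>b_q$ and therefore $j+1\notin I_q$, as required.
\end{itemize}

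The implication for $j-1$ is symmetric. From $j\in I_p$ and $j-1\notin I_p$ we get $j-1<a_p\le j$, so $a_p=j$ since $a_p$ is an integer. If $j>b_q$, then $b_q<j=a_p$ would make $I_p$ and $I_q$ disjoint. Thus $j<a_q$, so $j-1<a_q$ and $j-1\notin I_q$.

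The key point throughout is that the left endpoints $a_i$ are integers, which is what rules out the bad cases. With all four conditions verified, $G$ is a connected CL-graph, and the proposition follows.
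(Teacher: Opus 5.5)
Your proof is correct and follows the paper's argument: you keep the same sets (each $I_i$ as a CL-set with $t=1$), obtain iii) from the one-dimensional Helly theorem, and obtain iv) from the fact that the left endpoints $a_i$ are integers. The only difference is organizational: you split iv) by which implication is being checked, while the paper splits by whether $j<a_q$ or $j>b_q$, and the same inequalities drive both.
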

    
    \begin{proof}
        We may examine only the connected case. Let $G$ be a strongly interval graph on $\left\{J_j\right\}_{j=0}^\ell \cup\left\{I_i\right\}_{i=1}^r$. Then, it clearly satisfies the conditions $i)$ and $ii)$ in Definition \ref{CLgraphdef}. Also, we know that each interval is a convex set in $\mathbb{R}$. Thus, Helly's Theorem implies that if $\{I_{i_k}\}_{k=1}^t$ is a subset of $\{I_1,\ldots,I_r \}$ with $I_{i_p} \cap I_{i_q} \neq \emptyset$ for all $1 \leq p < q \leq t$, then $\bigcap_{k=1}^t I_{i_k} \neq \emptyset$. So, $G$ satisfies the condition $iii)$ in Definition \ref{CLgraphdef}. Now, let $I_p=[a,b]$, $I_q=[c,d]$ with $a,c \in \mathbb{N}_0$. Suppose that $I_p \cap I_q \neq \emptyset$ and $j \in I_p\setminus I_q$ for some $j \in \mathbb{N}_0$. Let us consider these two cases:

        \vspace{3mm}
        \noindent \textit{Case 1:} Suppose that $j < c$. Then, we have $a < c \leq b$. If $j-1 \notin I_p$, then we see $j-1 < a < c$ and $j-1 \notin I_q$. If $j+1 \notin I_p$, then $j+1 > b$. Since $j,c \in \mathbb{N}_0$ and $j \notin I_q$, we see $j+1 \leq c$. Hence, we see $b < c$ and this is a contradiction.

        \vspace{3mm}
        \noindent \textit{Case 2:} Suppose that $j > d$. Then, we have $a \leq d < b$. If $j+1 \notin I_p$, then we see $j+1 > b > d$ and $j+1 \notin I_q$. If $j-1 \notin I_p$, then $j-1 < a$. Since $j,a \in \mathbb{N}_0$ and $j \in I_p$, we see $j = a$. Hence, we see $j \leq d$ and this is a contradiction.

        Thus, $G$ satisfies the condition $iv)$ in Definition \ref{CLgraphdef}. Therefore, $G$ is a CL-graph.
    \end{proof}

    \begin{remark}
        A CL-graph is not a strongly interval graph in general. Although every strongly interval graph is chordal, a CL-graph may not be chordal as in Example \ref{CLexample}.
    \end{remark}

    We are now ready to present the main theorem of this section.
    \begin{theorem} \label{CLgraphtheorem} Let $G$ be a graph. Then, the followings are equivalent;
        \begin{enumerate}[label=(\alph*)]
            \item $\ell (G) = c(G)$.
            \item $G$ is a CL-graph.
        \end{enumerate}
    \end{theorem}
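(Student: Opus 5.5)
Both quantities are additive over connected components: $c(G)$ clearly is, and $\ell(G)$ is by definition. So I will reduce to a connected graph $G$. The basic tool is the inequality $c(G)\ge \ell(G)$. If $P: v_0,\ldots,v_\ell$ is a longest induced path, no two of its edges lie in a common clique, since that would force a chord. Hence the $\ell$ edges of $P$ lie in $\ell$ pairwise distinct maximal cliques. Consequently $\ell(G)=c(G)$ holds if and only if, for some (equivalently, every) longest induced path $P$, each maximal clique of $G$ contains exactly one edge $\{v_{j-1},v_j\}$ of $P$. I will write $Q_j$ for the unique maximal clique containing $\{v_{j-1},v_j\}$.

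\emph{(b)$\Rightarrow$(a).} Let $G$ be the intersection graph in Definition~\ref{CLgraphdef}. Then $J_0,\ldots,J_\ell$ is an induced path, so $\ell(G)\ge \ell$, and it suffices to show $c(G)\le \ell$.

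First, for $1\le j\le \ell-1$, some $I$ meets both $J_j=[j-1,j]$ and $J_{j+1}=[j,j+1]$ if and only if $j\in I$. Indeed, otherwise $I$ would have a piece ending in $[j-1,j)$ and the next piece starting in $(j,j+1]$, and these two pieces are at distance at most $2$, contradicting the gap condition $a_{k+1}-b_k>2$.

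Next, every maximal clique $C$ has the form $\{J_{m},J_{m+1}\}\cup\{I : m\in I\}$ for some $m$. To see this, note that the $I$'s in $C$ pairwise intersect, so by condition iii) they have a common point $x$. Let $m$ be the largest left endpoint among the pieces containing $x$. Then $m$ is an integer, $m\le x$, and $m$ lies in every $I$ of $C$. Since $b<\ell$ we also have $m<\ell$. So $C\cup\{J_m,J_{m+1}\}$ is again a clique, where for $m=0$ one uses $J_0,J_1$. Together with the first observation, any maximal clique containing a consecutive pair $J_m,J_{m+1}$ is forced to be exactly the set displayed above. Hence there are at most $\ell$ maximal cliques, and $c(G)\le\ell\le\ell(G)\le c(G)$.

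\emph{(a)$\Rightarrow$(b).} Fix a longest induced path $v_0,\ldots,v_\ell$ and set $v_j\mapsto J_j$. For every other vertex $w$, define
\[I_w=\bigcup_{j:\, w\in Q_j}[j-1,\,j-\tfrac12],\]
merging consecutive indices, so that a maximal run $s,\ldots,t$ of such $j$ gives the piece $[s-1,t-\tfrac12]$.

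Adjacency is then correct, and the check rests on one fact: two vertices are adjacent iff they lie in a common maximal clique, i.e. in a common $Q_j$, because every maximal clique is some $Q_j$.
\begin{itemize}
\item For $w,u\notin P$: the pieces $[j-1,j-\tfrac12]$ are aligned, so $I_w\cap I_u\neq\emptyset$ iff $w$ and $u$ share some $Q_j$, i.e. iff they are adjacent.
\item For $w\notin P$ and a path vertex: $w\sim v_i$ iff $w\in Q_i\cup Q_{i+1}$, iff $I_w$ meets $[i-1,i]=J_i$. Here $Q_0$ and $Q_{\ell+1}$ are interpreted as empty.
\end{itemize}

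I then verify the remaining conditions of Definition~\ref{CLgraphdef}.
\begin{itemize}
\item \emph{Gap condition.} Suppose some gap between pieces were at most $2$. Then $w\in Q_s$ and $w\in Q_{s+2}$ but $w\notin Q_{s+1}$. However, $w$ is then adjacent to both $v_s$ and $v_{s+1}$, so $\{w,v_s,v_{s+1}\}$ is a triangle. This triangle lies in a maximal clique containing the edge $\{v_s,v_{s+1}\}$, which must be $Q_{s+1}$, a contradiction.
\item \emph{Endpoint conditions.} Pieces start at integers and end below $\ell$, and $J_0=[0]$ as required.
\item \emph{Condition iii).} Pairwise intersecting $I$'s correspond to pairwise adjacent vertices. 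These form a clique, which lies in some $Q_j$, so all the sets contain $j-1$.
\item \emph{Condition iv).} This should follow from the same uniqueness of the $Q_j$: an integer $j\in I_p$ means $p\in Q_{j+1}$, and the boundary behaviour of $I_p$ and $I_q$ at $j\pm1$ is governed by membership in neighbouring cliques $Q$.
\end{itemize}

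I expect the main obstacle to be condition iv), along with the fine bookkeeping of endpoints. With the half-integer pieces, integers near the ends of runs may lie outside $I_p$ in ways that need care, and I may have to adjust the construction, for instance by choosing pieces $[j-1,j-1+\varepsilon]$ or by extending runs to integer endpoints, so that iv) translates exactly into a statement about the cliques $Q_{j}$, $Q_{j\pm1}$.
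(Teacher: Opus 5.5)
\paragraph{Gap in (b)$\Rightarrow$(a).} The direction (b)$\Rightarrow$(a) has a genuine gap. You claim that every maximal clique $C$ equals $F_m:=\{J_m,J_{m+1}\}\cup\{I: m\in I\}$ for some $m$. To show this you take a common point $x$ of the $I$'s in $C$ (by iii)), extract an integer $m$ lying in all of them, and assert that $C\cup\{J_m,J_{m+1}\}$ is a clique.

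\begin{itemize}
\item This is valid when $C$ contains no $J$.
\item Your first observation handles $C$ containing two (necessarily consecutive) $J$'s.
\item It fails when $C=\{J_k\}\cup\mathcal I$ contains exactly one $J_k$. Nothing ties $m$ to $k$, and $J_k$ need not be adjacent to $J_m$ or $J_{m+1}$. Each $I\in\mathcal I$ meets $J_k$, but the $I$'s are not convex, so iii) gives no common point inside $J_k$.
\end{itemize}

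You never use condition iv) in this direction, and it cannot be dispensed with. Take $\ell=5$, $I_p=[0,\frac{1}{2}]\cup[4,\frac{9}{2}]$ and $I_q=[1,\frac{9}{2}]$. These satisfy i)--iii), yet $\{J_1,I_p,I_q\}$ is a maximal clique containing neither $J_0$ nor $J_2$. So $c(G)=6>5=\ell(G)$.

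The missing step, as in the paper, is the following.
\begin{itemize}
\item If such a $C=\{J_k\}\cup\mathcal I$ were maximal, then $C\not\subseteq F_k$ and $C\not\subseteq F_{k-1}$.
\item So there is $I_p\in\mathcal I$ with $k\notin I_p$. Since pieces start at integers and $I_p$ meets $[k-1,k]$, this forces $k-1\in I_p$.
\item There is also $I_q\in\mathcal I$ with $k-1\notin I_q$.
\item Condition iv), applied to the integer $k-1\in I_p\setminus I_q$, gives $k\notin I_q$.
\item Hence $I_q\cap J_k=\emptyset$, again because pieces start at integers. This contradicts $C$ being a clique.
\end{itemize}

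\paragraph{Gap in (a)$\Rightarrow$(b).} Your construction is exactly the paper's: a run $s,\dots,t$ of indices with $w\in Q_j$ gives the piece $[s-1,t-\frac{1}{2}]$, which is the paper's $[a,a+t-\frac{1}{2}]$ built from runs of path-neighbours. Your checks of adjacency, the gap condition and iii) are fine. However, iv) is only said to ``should follow'', so this direction is incomplete as written.

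No change of construction is needed. For every integer $m$ one has $m\in I_w$ if and only if $w\in Q_{m+1}$, with $Q_0=Q_{\ell+1}=\emptyset$. So iv) becomes a statement about the $Q$'s. Suppose $w_p\sim w_q$, $w_p\in Q_{j+1}$ and $w_q\notin Q_{j+1}$.
\begin{itemize}
\item If $w_p\notin Q_{j+2}$ but $w_q\in Q_{j+2}$, then $\{w_p,w_q,v_{j+1}\}$ is a triangle. It lies in a maximal clique through $v_{j+1}$, i.e.\ in $Q_{j+1}$ or $Q_{j+2}$, and either choice contradicts the hypotheses.
\item The other half of iv) is the same argument with the triangle $\{w_p,w_q,v_j\}$ and the cliques $Q_j,Q_{j+1}$.
\end{itemize}

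\paragraph{A minor point.} Your reformulation of $\ell(G)=c(G)$ should also require that each edge of $P$ lies in a \emph{unique} maximal clique. ``Each maximal clique contains exactly one edge of $P$'' alone does not imply $c(G)=\ell(G)$. This does not affect your proof, since you only use the forward direction, which is correct.
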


    \begin{proof} 
    It is sufficient to consider the case where $G$ is a connected graph. \\
    $\boldsymbol{(a \Rightarrow b)}$ : Suppose that $\ell (G) = c(G)$. Let $P : 0,1,2,\ldots,\ell$ be a longest induced path in $G$. Also, let $F_j$ be the unique maximal clique containing the edge $\{j-1,j\}$ of $P$ for all $j \in [\ell]$. Then $F_1,\ldots,F_\ell$ are all of the maximal cliques of $G$. Let $W = V(G) \setminus V(P)$. If $W = \emptyset$, then $G=P$ and obviously, it is a CL-graph. So, we may continue with $W \neq \emptyset$. Suppose that $W = \{u_1, \ldots, u_r \}$. For any $u_i \in W$ and $j \in V(P)$, we have the following observations:

    \vspace{1mm}
    \begin{itemize}
        \item[$(\star)$]$u_i$ is adjacent to at least one vertex of the path $P$. Otherwise, $u_i$ belongs to a maximal clique not containing any vertices of $P$, which is not possible.
        
        \vspace{2mm}
        \item[$(\star \star)$] If $u_i$ is adjacent to $j$, then $u_i$ is adjacent to at least one of the vertices $j-1$ or $j+1$. Otherwise, $\{u_i,j\}$ is contained in a maximal clique other than $F_j$ and $F_{j+1}$. However, this is not possible since $P$ is an induced path.
    \end{itemize}
    
    For each $i \in [r]$, let $N_i = \{ j \in V(P) : \{j,u_i\} \in E(G) \}$. By $(\star)$ and $(\star \star)$, each $N_i$ is nonempty and the elements of $N_i$ can be written in increasing order as
    \begin{align}     
        & a_{i_1},a_{i_1}+1,\ldots,a_{i_1}+t_{i_1}, \nonumber \\ 
        & a_{i_2},a_{i_2}+1,\ldots,a_{i_2}+t_{i_2}, \nonumber \\ 
        &  \hspace{2.3cm} \vdots \nonumber \\ 
        & a_{i_{p_i}},a_{i_{p_i}}+1,\ldots,a_{i_{p_i}}+t_{i_{p_i}}. \nonumber
    \end{align}
    for some integers $0\leq a_{i_1}<a_{i_2}<\ldots<a_{i_{p_i}}\leq \ell-1$ and some positive integers $t_{i_1},\dots ,t_{i_{p_i}}$ satisfying $a_{i_{p_i}}+t_{i_{p_i}} \leq \ell$ and $a_{i_k}+t_{i_k}+2\leq a_{i_{k+1}}$ for each $k=1,\dots ,p_i-1$.  Let us construct a CL-graph. Let
    $$J_0=[0], \quad J_1=[0,1], \quad J_2=[1,2] \quad, \quad \ldots \quad, \quad J_\ell=[\ell-1,\ell]$$
    and for $i \in [r]$,
    {\small $$ I_i=\left[a_{i_1} \, , \, a_{i_1}+t_{i_1}-\frac{1}{2} \right] \cup \left[a_{i_2} \, , \, a_{i_2}+t_{i_2}-\frac{1}{2} \right] \cup \cdots \cup    \left[a_{i_{p_i}} \, , \, a_{i_{p_i}}+t_{i_{p_i}}-\frac{1}{2} \right].$$}
    
    \noindent Let $H$ be the intersection graph on $\{J_0,J_1,\ldots,J_\ell,I_1,\ldots,I_r \}$. Let us show that $H$ and $G$ are isomorphic graphs. Here, $J_j$'s correspond to the vertices $j$'s of $P$ and $I_i$'s correspond to $u_i$'s. \\
    Before we proceed, we note one more observation which will be useful.
     \begin{itemize}
        \item[$(\star\star\star)$] For any $j<\ell$ and $i\in [r]$, the vertex $u_i$ is adjacent to both $j$ and $j+1$ if and only if $j\in I_i$ by the definition of $I_i$. 
        \end{itemize}
    Keeping in mind that every edge of $P$ is contained in a unique maximal clique, it is straightforward using $(\star\star\star)$ to see that for any $i\neq k$, $I_i$ is adjacent to $I_k$ in $H$ if and only if $\{u_i,u_k\}\in E(G)$.

    \begin{itemize}
    \item Similarly, we have
    \begin{align*}
        \{I_i, J_j\}\in E(H)& \iff I_i \cap J_j \neq \emptyset\\
       &\iff I_i \cap \{j-1, j\}\neq \emptyset \\
       &\iff \{u_i, j\}\in E(G) \text{ by definition of } I_i.
    \end{align*}
   
  \item Lastly, for $j\neq k$, we have
\begin{align*}
\{J_j, J_k\}\in E(H)& \iff J_j \cap J_k \neq \emptyset\\
       &\iff \{j-1, j\} \cap \{k-1, k\} \neq \emptyset \\
       &\iff j=k-1 \text{ or } k=j-1\\
       &\iff \{j,k\} \in E(G).
\end{align*}
\end{itemize}

    Therefore, we conclude that $G$ and $H$ are isomorphic. Let us show that $H$ is a CL-graph.

    Since $H$ clearly satisfies the conditions $i)$ and $ii)$ of the definition of CL-graph, we first consider the condition $iii)$. Let $\mathcal{I}$ be a subset of $\{I_1,\ldots,I_r \}$ such that every pair of elements of $\mathcal{I}$ has non-empty intersection. Then, $\mathcal{I}$ is a clique in $H$. Let $U$ be the set of vertices in $W$ that correspond to $\mathcal{I}$. Then $U$ is a clique in $G$. Let $F_j$ be the maximal clique containing $U$. Then $\mathcal{I}\cup\{J_{j-1}, J_j\}$ is a clique in $H$. By $(\star\star\star)$ it follows that every element of $\mathcal{I}$ contains $j-1$, which shows that $iii)$ holds.
    
    Now, let us consider the condition $iv)$ of the definition of CL-graph. Suppose $I_p \cap I_q \neq \emptyset$, or equivalently, $\{u_p,u_q\}\in E(G)$. Also, suppose $j \in I_p \setminus I_q$ for some $j \in \mathbb{N}_0$. Then $u_p$ is adjacent to both $j$ and $j+1$. 

    \vspace{3mm}
    \noindent \textit{Case 1:} Suppose that $j+1 \notin I_p$. This implies $\{u_p,j+2\}\notin E(G)$. Assume for a contradiction $j+1 \in I_q$. Then $u_q$ is adjacent to both $j+1$ and $j+2$. On the other hand, since $j\notin I_q$, we know that $\{u_q,j\}\notin E(G)$. Recall that there are exactly two maximal cliques of $G$ that contains $j+1$, namely, $F_{j+1}$ and $F_{j+2}$. Then, the clique $\{u_p, u_q, j+1\}$ must be contained in either $F_{j+1}$ or $F_{j+2}$. However, each case gives a contradiction.

    \vspace{3mm}
    \noindent \textit{Case 2:} Assume for a contradiction that $j-1 \notin I_p$ but $j-1 \in I_q$. Similar to the proof of the previous case, one can deduce that $\{u_p,j-1\}\notin E(G)$ and $\{u_q,j+1\}\notin E(G)$. On the other hand, neither $F_j$ nor $F_{j+1}$ contains the clique $\{j, u_p, u_q\}$, a contradiction.

    Therefore, $H$ is a CL-graph, as desired.

    \vspace{3mm}
    \noindent $\boldsymbol{(b \Rightarrow a)}$ : Suppose that $G$ is a CL-graph on $\{J_0,J_1,\ldots,J_\ell,I_1,\ldots,I_r \}$. If $r=0$, then $G$ is a path of length $\ell$. In this case, we have $\ell(G)=c(G)$. So, we may continue with $r \geq 1$. Let $\mathcal{F} (G)$ be the set of all maximal cliques of $G$. Let us define $F_j=\{J_j,J_{j+1}\} \cup \{I_i : j \in I_i \}$ for each $0 \leq j \leq \ell - 1$. We claim that $\mathcal{F} (G) = \{F_0,F_1,\ldots,F_{\ell-1}\}$.
    
    Firstly, let us show $\{F_0,F_1,\ldots,F_{\ell-1}\} \subseteq \mathcal{F} (G)$. Clearly, $F_j$ is a clique in $G$ for each $0 \leq j \leq \ell -1 $. Let us show that $F_j$ is a maximal clique. Assume for a contradiction that there is a clique $F$ in $G$ such that $F_j \subsetneq F$. Then, there is an element $I_t \in F \setminus F_j$. Thus, $j \notin I_t$ by the construction of $F_j$. Since $F$ is a clique containing $\{J_j,J_{j+1},I_t\}$, we have $J_j \cap I_t \neq \emptyset$ and $J_{j+1} \cap I_t \neq \emptyset$. Here, $j \notin I_t$ and $J_j \cap I_t \neq \emptyset$ imply $[a,j-\epsilon] \subseteq I_t$ for some $a \leq j-1$ and $0 < \epsilon \leq 1$. Also, $j \notin I_t$ and $J_{j+1} \cap I_t \neq \emptyset$ imply $[j+1,b] \subseteq I_t$ for some $b > j+1$. Hence, $[a,j-\epsilon] \cup [j+1,b] \subseteq I_t$. However, $(j+1)-(j-\epsilon)=1+\epsilon \leq 2$ yields a contradiction to the definition of CL-graph.

    Now, we show $\mathcal{F} (G) \subseteq \{F_0,F_1,\ldots,F_{\ell-1}\}$. Assume the contrary. Then, there exists $F \in \mathcal{F}(G) \setminus \{F_0,F_1,\ldots,F_{\ell-1}\}$. If $F$ contains two elements of the form $J_j$ and $J_{j+1}$, then any element in $F$ contains $j$ and thus $F = F_j$, a contradiction. Therefore, we may assume that $F$ contains at most one element from the set $\{J_0,J_1,\ldots,J_{\ell}\}$.
    
    Suppose that $F \cap \{J_0,J_1,\ldots,J_\ell \} = \emptyset$. Then, $F\subseteq \{I_1,\ldots, I_r\}$. From the condition $iii)$ in Definition \ref{CLgraphdef}, we have $\bigcap_{f\in F} f \neq \emptyset$. Then, there exists $j \in \mathbb{N}_0$
    such that $j \in f$ for all $f\in F$ since the smallest element of every largest subinterval in $f$ is an integer. In this case, $F \subset F_j$, which is a contradiction.
    
    Now, suppose that $F \cap \{J_0,J_1,\ldots,J_\ell \} \neq \emptyset$. Then, $F=\{J_j\} \cup \{I_{i_k}\}_{k=1}^t$ for some $0 \leq j \leq \ell$ where $\{I_{i_k}\}_{k=1}^t \subseteq \{I_1,\ldots, I_r\}$. Since both $F$ and $F_j$ are maximal cliques, there is $p \in [t]$ such that $j \notin I_{i_p}$.
     Since $J_j \cap I_{i_p} \neq \emptyset$, and the smallest element of every largest subinterval in $I_{i_p}$ is an integer, we see $j-1 \in I_{i_p}$. Similarly, since both $F$ and $F_{j-1}$ are maximal cliques, there is $q \in [t]$ such that $j-1 \notin I_{i_q}$. The condition $iv)$ in Definition \ref{CLgraphdef} implies that $j \notin I_{i_q}$. Then, we see $J_j \cap I_{i_q} = \emptyset$ and this contradicts $F$ being a clique.

    Therefore, $\mathcal{F} (G) \subseteq \{F_0,F_1,\ldots,F_{\ell-1}\}$. Hence, we see $c(G)=\ell$. Moreover, $P:J_0,J_1,\ldots,J_{\ell}$ is an induced path of length $\ell$ in $G$. The inequality $\ell (G) \leq c(G)$ implies $\ell (G) = c(G)$.
    \end{proof}

    \begin{corollary} \label{CLgraphcorollary}
        Let $G$ be a graph. Then, the followings are equivalent: 
        \begin{enumerate}[label=(\alph*)]
            \item $\reg(S/J_G) = \ell(G) = c(G)$.
            \item $G$ is a CL-graph.
        \end{enumerate}
    \end{corollary}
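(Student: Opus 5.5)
The corollary should follow by combining Theorem \ref{CLgraphtheorem} with the general bounds on the regularity of binomial edge ideals. The key point is that the chain of inequalities
\[
\ell(G) \leq \reg(S/J_G) \leq c(G),
\]
which holds for every graph $G$, collapses to equalities as soon as the two outer terms coincide. Here the lower bound is Theorem \ref{lowerupperbreg} and the upper bound is Theorem \ref{numberofmc}.

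For $(a \Rightarrow b)$, assume $\reg(S/J_G) = \ell(G) = c(G)$. In particular $\ell(G) = c(G)$, so Theorem \ref{CLgraphtheorem} immediately gives that $G$ is a CL-graph. No algebraic input is needed in this direction.

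For $(b \Rightarrow a)$, assume $G$ is a CL-graph. Theorem \ref{CLgraphtheorem} gives $\ell(G) = c(G)$. Substituting into the chain above gives $\ell(G) \leq \reg(S/J_G) \leq c(G) = \ell(G)$, hence $\reg(S/J_G) = \ell(G) = c(G)$. Both bounds are stated for arbitrary graphs, not only connected ones, so disconnected CL-graphs need no separate treatment. If one prefers, one can instead reduce to the connected case via Remark \ref{regscc}, using that $\ell$ and $c$ are both additive over connected components.

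I do not expect a real obstacle here, since all the substantive work is in Theorem \ref{CLgraphtheorem}. The only point to check is the convention for $\ell(G)$ in the disconnected case. Theorem \ref{lowerupperbreg} is stated with $\ell(G)$ defined as the sum over components, which matches the paper's definition and Theorem \ref{CLgraphtheorem}, so the argument goes through unchanged.
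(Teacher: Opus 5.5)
Your proposal is correct and matches the paper's proof, which likewise just combines Theorem~\ref{lowerupperbreg}, Theorem~\ref{numberofmc} and Theorem~\ref{CLgraphtheorem}. Your sandwich argument $\ell(G) \leq \reg(S/J_G) \leq c(G) = \ell(G)$ for $(b \Rightarrow a)$, and the direct appeal to Theorem~\ref{CLgraphtheorem} for $(a \Rightarrow b)$, are exactly what is needed, including in the disconnected case.
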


    \begin{proof}
        It follows from Theorem \ref{lowerupperbreg}, Theorem \ref{numberofmc} and Theorem \ref{CLgraphtheorem}
    \end{proof}
\subsection{Possible regularity values between $\ell(G)$ and $c(G)$}\label{subsec:CL}
    It is known that $\ell(G) \leq \reg(S/J_G) \leq c(G)$ for any graph $G$. This naturally leads to the question of for which triples $(\ell, r, c)$ of natural numbers with $\ell \leq r \leq c$ there exists a graph $G$ that satisfies the equations $\ell(G) = \ell$, $\reg(S/J_G) = r$ and $c(G) = c$.
    
    Let us consider the case $\ell=1$. Then, $\ell(G)=1$ implies that $G$ has the form $G = K_t \sqcup \left( \bigsqcup_{i=1}^{c-1} K_1 \right)$ for some $t \geq 2$ and $c \geq 1$. Then, we see $\ell(G)=1$, $\reg(S/J_G)=1$, $c(G)=c$ and $G$ is disconnected if $c \geq 2$. So, we conclude that there is no graph $G$ satisfying the desired equalities if $\ell=1$, and $2 \leq r \leq c$. We show in Theorem \ref{2lrcthrm} that there is a connected graph $G$ that satisfies the desired equalities for any $2 \leq \ell \leq r \leq c$. Before proceeding to the proof, we recall a well-known inequality involving the regularity of binomial edge ideals, which has been frequently used in the literature.

    Let $G$ be a graph on $[n]$ and let $v$ be a non-simplicial vertex in $G$. Then, \cite[Lemma 4.8]{GIGS2M} implies that $J_G = \left( J_{G-v} + (x_v,y_v)\right) \cap J_{G_v}$ where $G-v$ is the induced subgraph of $G$ on $V(G) \setminus \{v\}$ and $G_v$ is the graph obtained by $G$ by adding edges between the nonadjacent neighbours of $v$ in $G$, so that $v$ becomes a simplicial vertex. Then, there is an exact sequence
    \begin{align}
        0 \longrightarrow \frac{S}{J_G} \longrightarrow \frac{S_v}{J_{G-v}} \oplus \frac{S}{J_{G_v}} \longrightarrow \frac{S_v}{J_{G_v - v}} \longrightarrow 0 \tag{1} \label{exactseq}
    \end{align}
    where $S_v=K[x_i,y_i : i \in [n]\setminus \{v\} ]$. Applying \cite[Corollary 18.7]{GS} to the exact sequence (\ref{exactseq}) the following well-known inequality is obtained:
    \begin{align}
        \reg(S/J_G) \leq \max\{ \reg(S_v/J_{G-v}), \reg(S/J_{G_v}), \reg(S_v/J_{G_v-v})+1 \}. \tag{2} \label{regmaxineq}
    \end{align}
    
    We are now set to prove the following theorem.

    \begin{theorem} \label{2lrcthrm}
        For any positive integers $\ell,r,c$ with $2 \leq \ell \leq r \leq c$, there exists a connected graph $G$, with $\ell = \ell(G)$, $r=\reg (S/J_G)$ and $c = c(G)$.
     \end{theorem}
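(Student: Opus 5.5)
I will construct $G$ explicitly as a path with gadgets attached. Each gadget will raise the regularity by exactly one, or raise the number of maximal cliques by exactly one, without changing $\ell(G)$.

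\textbf{Base graph.} Start from a path $P: 0,1,\ldots,\ell$ of length $\ell$. To increase the regularity beyond $\ell(G)$ while keeping $\ell(G)$ fixed, attach $r-\ell$ pendant triangles. Concretely, take the fan / ``triangle with whiskers'' trick: attach at the vertex $1$ (or at an interior vertex) a complete graph, and hang leaves on its vertices. A cleaner choice is to use known graphs $H$ with $\ell(H)=2$ and $\reg(S/J_H)$ large, for instance $K_{1,m}$ joined appropriately. I would commit to the following. Glue at the vertex $0$ of $P$ a clique $K_{m}$ with $m=r-\ell+2$ vertices $0=w_1,w_2,\ldots,w_m$. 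Then attach a leaf $z_i$ to each $w_i$ for $i\ge 2$. The induced paths through this gadget have the form $z_i,w_i,w_1=0,1,\ldots$, of length $\ell+2$ at most. So I will instead place the gadget at the vertex $1$ and shorten appropriately, so that the longest induced path stays $\ell$. Indeed, if the clique sits at vertex $1$ with the leaf $0$ removed and replaced by leaves $z_i$ on the clique vertices, then every longest induced path is $z_i,w_i,1,2,\ldots,\ell$, of length $\ell$.

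\textbf{Regularity.} For the regularity I will use the gluing/decomposable-graph formula. The graph is obtained by gluing at the free vertex $1$: on one side the path $1,\ldots,\ell$, on the other side a ``clique with whiskers'' $T_m$. Gluing at a vertex that is simplicial in both parts gives additive regularity. Since that formula is not stated earlier in the paper, I will instead argue with inequality (2) applied to the non-simplicial vertices $w_i$, by induction on $m$. Deleting or completing at $w_i$ reduces to smaller instances, giving the upper bound $r$. For the lower bound, the induced subgraph consisting of the path $z_2,w_2,1,\ldots,\ell$ together with the other whiskers forms an induced tree or decomposable graph. Its regularity can be computed via Theorem \ref{indsubgraphreg} and Remark \ref{regscc}, after deleting $1$ to split into components: an induced subgraph consisting of $\ell-1$ path edges $2,\ldots,\ell$ plus disjoint edges $\{z_i,w_i\}$ already has regularity $(\ell-2)+(m-1)$. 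I will tune $m$ so that this equals $r$, adjusting the path length by one if needed. This lower-bound-by-disjoint-induced-subgraph plus upper-bound-by-(2) pairing is the main obstacle; the bookkeeping of off-by-one constants there is where care is needed.

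\textbf{Cliques.} Next, count $c(G)$ for this graph: it equals $(\ell-1)+1+(m-1)$, which equals $r$ when $m$ is chosen as above. To reach $c\ge r$ without changing $\ell$ or the regularity, add $c-r$ extra vertices $v_1,\ldots,v_{c-r}$, each adjacent exactly to $2$ and $3$, or more generally to two adjacent path vertices. Each $v_k$ creates a new maximal clique $\{2,3,v_k\}$ and destroys the old edge clique $\{2,3\}$, which is a net gain of one clique only after the first. So I will instead join the $v_k$ to $\{2,3\}$ while making them pairwise nonadjacent. Then the maximal cliques $\{2,3,v_k\}$ are pairwise distinct, and the total increase is $c-r$ for $c-r\ge1$ if I first account for the lost clique $\{2,3\}$; I will fix this by using $c-r+1$ such vertices when $c>r$. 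These vertices do not lengthen induced paths, since each $v_k$ only shortcuts an edge. The regularity is unchanged, because $v_k$ is simplicial with $N(v_k)$ a clique, and adding simplicial vertices to an existing clique-pair preserves regularity by (2) (or by the known result on free vertices). The case $\ell=2$ needs a separate small check, since the path then has too few interior vertices; I would handle it with $K_{1,\cdot}$-type graphs directly.
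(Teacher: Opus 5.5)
Your plan has genuine gaps, beginning with the base gadget. As written (path $1,\ldots,\ell$, clique $K_m$ on $\{1=w_1,\ldots,w_m\}$, whiskers $z_i$ at $w_i$ for $i\ge 2$), the induced path $z_i,w_i,1,2,\ldots,\ell$ has $\ell+2$ vertices, so $\ell(G)=\ell+1$. Likewise $c(G)=(\ell-1)+1+(m-1)=r+1$ for $m=r-\ell+2$. The gluing formula (a published result, stated later in the paper as Theorem~\ref{gluingreg}), applied at each $w_i$ with $i\ge2$ and then at $1$, gives $\reg(S/J_G)=m+\ell-1=r+1$. Shortening the path to $1,\ldots,\ell-1$ repairs these counts. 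More seriously, the lower-bound route you commit to cannot work for this gadget. The subgraph you describe is not induced: $w_2,\ldots,w_m$ are pairwise adjacent, so the edges $\{z_i,w_i\}$ are not separate components. Since all $w_i$ lie in one clique, at most one component of any induced subgraph meets them. Hence Theorem~\ref{indsubgraphreg} together with Proposition~\ref{pathreg} and Remark~\ref{regscc} only certifies lower bounds that do not grow with $m$, and (\ref{regmaxineq}) only bounds from above. You genuinely need Theorem~\ref{gluingreg} here. Note also that tuning $m$ against your bound $(\ell-2)+(m-1)$ would make the true regularity $r+2$. The paper sidesteps all of this by hanging the triangles on a single vertex $v$, so that $G-v$ is a disjoint union of edges and a path.

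The clique-adding step rests on a false principle: adding a vertex whose neighbourhood is an edge need not preserve regularity. Doing this to $K_3$ gives $K_4$ minus an edge, which has $\ell=c=2$ and hence regularity $2>1$. This is exactly the local situation when your second added vertex joins $\{2,3\}$ after the first. In the repaired graph the regularity does survive when $\ell\ge4$, but that needs its own computation, e.g.\ (\ref{regmaxineq}) at the vertex $2$. For $\ell=3$ the repaired path is just $1,2$, so there is no edge $\{2,3\}$, and using $\{1,2\}$ instead fails. In that case $G-1$ has the star on $2$ and the added vertices as a separate component, so $\reg(S/J_G)\ge m+2=r+1$ whenever $r\ge4$ and $c>r$. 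Finally, $\ell=2$ is not a small check: stars have regularity at most $2$, so you need a different idea to combine $\ell(G)=2$ with large regularity. The paper's windmill supplies it: take $r-\ell+2$ triangles through a common vertex $v$ and $c-r$ pendant edges at $v$, and for $\ell\ge3$ hang a path of length $\ell-2$ at a non-central vertex of one triangle. Then $G-v$ gives the lower bound $r$. Since $G_v$ and $G_v-v$ satisfy $\ell=c\le\max\{\ell-1,1\}$, (\ref{regmaxineq}) gives the upper bound $r$. The pendant edges add maximal cliques without affecting either bound.
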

    \begin{proof}
        \textit{Case 1:} 
        Suppose that $\ell = 2$. Let us consider the graph $G$ with the set of vertices $V(G)=\{v,v_1,\ldots,v_r,v_1',\ldots,v_r',w_1,\ldots,w_{c-r}\}$ and the set of edges \[E(G) = \{ \{v,v_i\},\{v,v_i'\},\{v_i,v_i'\} : 1 \leq i \leq r \} \cup \{ \{v,w_j\} : 1 \leq j \leq c-r \}.\]       

        Clearly, $\ell (G) = 2$ and $c(G)=c$. We show that $\reg(S/J_G)=r$. We have $r = \reg(S/J_{G-v}) \leq \reg(S/J_G)$ by Proposition \ref{pathreg}, Theorem \ref{indsubgraphreg} and Remark \ref{regscc}. Also, $v$ is not a simplicial vertex in $G$. Then, we use the inequality (\ref{regmaxineq}), that is,
        $$
            \reg(S/J_G) \leq \max\{ \reg(S_v/J_{G-v}), \reg(S/J_{G_v}), \reg(S_v/J_{G_v-v})+1 \}.
        $$
        As we mentioned above, $\reg(S_v/J_{G-v})=r$. Also, $G_v$ and $G_v-v$ are complete graphs. So, $\reg(S/J_{G_v})=\reg(S_v/J_{G_v-v})=1$ from Theorem \ref{completereg}. Thus, we have the inequality
        $$
            \reg(S/J_G) \leq \max\{ r, 1, 2 \}.
        $$
        Since $r \geq \ell = 2$, it can be seen that $\reg(S/J_G) \leq r$. Hence, we see $\reg(S/J_G)=r$.

        \vspace{3mm}
        \noindent \textit{Case 2:} Suppose that $\ell \geq 3$. Let $H$ be the graph with the set of vertices $V(H)=\{v,v_1,\ldots,v_{r-\ell+2},v_1',\ldots,v_{r-\ell+2}',w_1,\ldots,w_{c-r}\}$ and the set of edges
        \begin{align*}
            E(H) = & \{ \{v,v_i\},\{v,v_i'\},\{v_i,v_i'\} : 1 \leq i \leq r-\ell+2 \} \\ & \cup \{ \{v,w_j\} : 1 \leq j \leq c-r \}.
        \end{align*}
        Also, let $P$ be the path graph with consecutive vertices $v_1,u_1,u_2,\ldots,u_{\ell-2}$ where $V(P) \cap V(H)=\{v_1\}$. Now, let us consider the graph $G=H \cup P$ displayed in Figure \ref{lg2 rc}.

        \begin{figure}[hbt!]
        \centering
	\begin{tikzpicture}[scale=0.9]
        \tikzset{
        place/.style={circle, minimum size=2mm,
        inner sep=0pt, outer sep=0pt, fill
        }}
        \node[place] (v) at (0,0) {};
        \node at (0.1,-0.32) {$v$};
        \node[place, label=270:$v_1$] (v1) at (-2,0) {};
        \node[place, label=125:$v'_1$] (vv1) at (-1.732,1) {};
        \node[place, label=135:$v_2$] (v2) at (-1,1.732) {};
        \node[place, label=90:$v'_2$] (vv2) at (0,2) {};
        \node at (0.8,1.4) {\rotatebox{148}{$\cdots$}};
        \node[place, label=45:$v_{r-\ell+2}$] (vr-l+2) at (1.732,1) {};
        \node[place, label=0:$v'_{r-\ell+2}$] (vvr-l+2) at (2,0) {};
        
        \node[place, label=225:$w_1$] (w1) at (-1.62,-1.18) {};
        \node[place, label=270:$w_2$] (w2) at (-0.62,-1.90) {};
        \node at (0.5,-1.5) {\rotatebox{20}{$\cdots$}};
        \node[place, label=315:$w_{c-r}$] (wcr) at (1.62,-1.18) {};

        \node[place, label=270:$u_1$] (u1) at (-4,0) {};
        \node[place, label=270:$u_2$] (u2) at (-6,0) {};
        \node at (-7,0) {\rotatebox{0}{$\cdots$}};
        \node[place, label=270:$u_{\ell-3}$] (ul-3) at (-8,0) {};
        \node[place, label=270:$u_{\ell-2}$] (ul-2) at (-10,0) {};
        
        \draw (v) -- (v1);
        \draw (v) -- (vv1);
        \draw (v) -- (v2);
        \draw (v) -- (vv2);
        \draw (v) -- (vr-l+2);
        \draw (v) -- (vvr-l+2);
        \draw (v1) -- (vv1);
        \draw (v2) -- (vv2);
        \draw (vr-l+2) -- (vvr-l+2);
        \draw (v) -- (w1);
        \draw (v) -- (w2);
        \draw (v) -- (wcr);
        \draw (v1) -- (u1);
        \draw (u1) -- (u2);
        \draw (ul-3) -- (ul-2);
            
	\end{tikzpicture}
        \caption{\label{lg2 rc} The graph $G = H \cup P$}
        \end{figure}
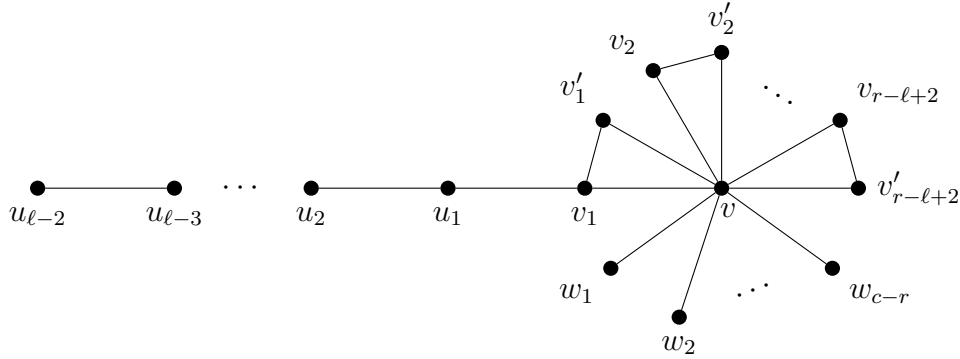

        Obviously, $\ell(G) = \ell$ and $c(G)=c$. We show that $\reg(S/J_G)=r$. We have $r = \reg(S/J_{G-v}) \leq \reg(S/J_{G})$ from Proposition \ref{pathreg}, Theorem \ref{indsubgraphreg} and Remark \ref{regscc}. On the other hand, $v$ is not a simplicial vertex in $G$. Then, we write the inequality
        $$
            \reg(S/J_G) \leq \max\{ \reg(S_v/J_{G-v}), \reg(S/J_{G_v}), \reg(S_v/J_{G_v-v})+1 \}.
        $$
        As stated above, $\reg(S_v/J_{G-v})=r$. Also, $\ell(G_v)= \ell -1$ and $c(G_v) = \ell -1$. By Theorem \ref{lowerupperbreg} and Theorem \ref{numberofmc}, we see $\reg(S/J_{G_v})=\ell-1$. Similarly, we see $\ell(G_v-v)= \ell -1$ and $c(G_v-v) = \ell -1$. From Theorem \ref{lowerupperbreg} and Theorem \ref{numberofmc}, we observe $\reg(S_v/J_{G_v-v})=\ell-1$. Then, we have the inequality
        $$
            \reg(S/J_G) \leq \max \{r, \ell-1, \ell \} \leq r.
        $$
        which proves that $\reg(S/J_G)=r$.
    \end{proof}

    \section{WL-Graphs}

    For any graph $G$, the \textit{clique number} of $G$ is denoted by $\omega(G)$ and defined as the maximum size of a clique in $G$. The authors of \cite{LBEI} established an upper bound for $\reg(S/J_G)$ involving the clique number of $G$ as follows.

    \begin{theorem}\cite[Theorem 2.1]{LBEI} \label{wGreg}
        Let $G$ be a connected graph on $[n]$. Then, $$\reg(S/J_G) \leq n-\omega(G)+1.$$
    \end{theorem}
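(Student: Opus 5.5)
We prove the bound by induction, using the exact sequence (\ref{exactseq}) and the inequality (\ref{regmaxineq}). The induction runs on pairs $(n, m)$ ordered lexicographically, where $n=|V(G)|$ and $m$ is the number of non-edges of $G$. Throughout we allow connected graphs on any finite vertex set.

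\emph{Base case and choice of $v$.} If $G$ is complete, then $\omega(G)=n$ and Theorem~\ref{completereg} gives $\reg(S/J_G)=1=n-\omega(G)+1$. (For $n=1$ the regularity is $0$.) Otherwise $G$ is connected but not complete, so it contains an induced path $a,v,b$. The middle vertex $v$ is then non-simplicial, because the edges $\{a,v\}$ and $\{v,b\}$ cannot lie in a common clique. We apply (\ref{regmaxineq}) at this $v$ and bound the three terms separately.

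\emph{The terms $G_v$ and $G_v-v$.} The graph $G_v$ has the same $n$ vertices and strictly fewer non-edges than $G$. It is connected, and $\omega(G_v)\ge \omega(G)$. By induction, $\reg(S/J_{G_v})\le n-\omega(G)+1$.

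The graph $G_v-v$ is connected: any path in $G$ through $v$ can be rerouted, since the neighbours of $v$ form a clique in $G_v$. Let $Q$ be a maximum clique of $G$. If $v\notin Q$, then $Q$ survives in $G_v-v$. If $v\in Q$, then $Q\setminus\{v\}$ together with a neighbour of $v$ outside $Q$ is a clique in $G_v-v$. Such a neighbour exists, since otherwise $N(v)\cup\{v\}\subseteq Q$ and $v$ would be simplicial. In both cases $\omega(G_v-v)\ge\omega(G)$. Induction on $n-1$ vertices then gives $\reg(S_v/J_{G_v-v})+1\le (n-1)-\omega(G)+1+1=n-\omega(G)+1$.

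\emph{The term $G-v$.} This is the step where care is needed, because $G-v$ may be disconnected. Let $G_1,\dots,G_k$ be its components with $n_i$ vertices. By Remark~\ref{regscc}, $\reg(S_v/J_{G-v})=\sum_i \reg(J_{G_i})$. Let $G_1$ be the component meeting $Q\setminus\{v\}$, so that $\omega(G_1)\ge \omega(G)-1$, or $\omega(G_1)\ge\omega(G)$ if $v\notin Q$. We bound each term:
\begin{itemize}
\item for $G_1$, induction gives $\reg(J_{G_1})\le n_1-\omega(G_1)+1$;
\item for $i\ge 2$, Theorem~\ref{lowerupperbreg} gives $\reg(J_{G_i})\le n_i-1$ (trivially $0$ for isolated vertices).
\end{itemize}
Summing, with $\sum_i n_i=n-1$, we get
\[
\reg(S_v/J_{G-v})\le n-1-\omega(G_1)+1-(k-1).
\]
If $v\notin Q$, this is at most $n-\omega(G)-(k-1)\le n-\omega(G)+1$. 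If $v\in Q$, it is at most $n-\omega(G)+1-(k-1)\le n-\omega(G)+1$.

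Combining the three bounds in (\ref{regmaxineq}) gives $\reg(S/J_G)\le n-\omega(G)+1$, completing the induction. The main care lies in the disconnected case of $G-v$, which is handled by the crude bound $n_i-1$ on the extra components. A secondary point is checking connectivity and the clique number of $G_v-v$ when $v$ lies in every maximum clique; there non-simpliciality supplies the extra neighbour.
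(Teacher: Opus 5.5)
The paper does not prove this statement. It is quoted from \cite{LBEI} and used only as a black box, for example in Corollary~\ref{WLgraphcorollary}, so there is no in-paper argument to compare yours with. Taken on its own, your proof is correct and self-contained. It uses only tools the paper already recalls: the exact sequence (\ref{exactseq}) with inequality (\ref{regmaxineq}), Remark~\ref{regscc}, and Theorems~\ref{completereg} and~\ref{lowerupperbreg}.

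The lexicographic induction on (vertices, non-edges) is well founded. It is exactly what the $G_v$ term needs, since $G_v$ has the same vertex set and strictly fewer non-edges because $\{a,b\}$ is added. You also handle the two delicate points correctly:
\begin{itemize}
\item For $G_v-v$ you need $\omega(G_v-v)\ge\omega(G)$, not merely $\omega(G)-1$, to absorb the $+1$. Your observation that a non-simplicial $v$ lying in a maximum clique $Q$ has a neighbour outside $Q$ supplies this.
\item For $G-v$, only the component carrying $Q\setminus\{v\}$ needs the induction hypothesis. The Matsuda--Murai bound on the other $k-1$ components costs nothing, since each contributes at most $n_i-1$.
\end{itemize}

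One notational correction: you write $\reg(J_{G_i})$ where you mean $\reg(S_i/J_{G_i})$. Since $\reg(J)=\reg(S/J)+1$, the inequalities as literally written are false; for instance $\reg(J_{K_2})=2>1$. The computation you intend is right.
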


    For any connected graph $G$ on $[n]$, we know from Theorem \ref{lowerupperbreg} and Theorem \ref{wGreg} that $\ell(G) \leq \reg(S/J_G) \leq n-\omega(G)+1$. In this section, we investigate the class of connected graphs $G$ for which $\ell(G)=n-\omega(G)+1$ holds. For this purpose, we define WL-graphs as follows.

    \begin{definition} \label{WLdef}
         A graph $G$ is called a \textit{WL-graph} if there exists some positive integers $\ell, \omega$ with $\omega \geq 2$ such that $G$ can be written as $G=P \cup K \cup H$ where $P$, $K$ and $H$ satisfy the following properties:
        \begin{enumerate}[label=\roman*)]
            \item $P$ is a path graph with consecutive vertices $v_0,v_1,\ldots,v_\ell$.
            
            \item $K$ is a complete graph on the vertex set $\{v_{t},v_{t+1},u_1,\ldots,u_{\omega-2}\}$ for some $0 \leq t < \ell$ where $V(P) \cap V(K) = \{v_t,v_{t+1} \}$.

            \item $H$ is a graph with the vertex set $V(H)=V(P) \cup V(K)$ and edge set $E(H) \subseteq \{ \{v_i,u_j\}: 0 \leq i \leq \ell, \hspace{3mm} 1 \leq j \leq \omega-2 \}$.
        \end{enumerate}

    \end{definition}

    Note that if $\omega=2$ in Definition \ref{WLdef}, then $G=P_{\ell+1}$.

    \begin{example} \label{WLexample}
        Let $P$ be a path graph with consecutive vertices $v_0,v_1,\ldots,v_8$ and let $K$ be the complete graph on $V(K)=\{v_4,v_5,u_1,u_2,u_3,u_4\}$. In addition, let $H$ be a graph with $V(H)=\{v_0,v_1,\ldots,v_8,u_1,\ldots,u_4\}$ and $E(H)=\{\{v_1,u_2\},\{v_2,u_2\},\{v_2,u_4\},\{v_7,u_3\},\{v_8,u_1\}\}$. Then, the graph $G=P \cup K \cup H$ shown in Figure \ref{WLgraphexample} is a WL-graph. It is readily observed that $\ell(G)=|V(G)|-\omega(G)+1=8$.
    
        \begin{figure}[hbt!]
			\centering
			\begin{tikzpicture}[scale=0.9,auto=left,inner sep=0pt]
				\tikzstyle{every node}=[circle, minimum size=5pt,fill]
				\node (v0) at (0,0) [label=270:$v_0$]{};
				\node (v1) at (1.5,0) [label=270:$v_1$]{};
				\node (v2) at (3,0) [label=270:$v_2$]{};
				\node (v3) at (4.5,0) [label=270:$v_3$]{};
				\node (v4) at (6,0) [label=270:$v_4$]{};
				\node (v5) at (7.5,0) [label=270:$v_5$]{};
				\node (v6) at (9,0) [label=270:$v_6$]{};
				\node (v7) at (10.5,0) [label=270:$v_7$]{};
                \node (v8) at (12,0) [label=270:$v_8$]{};
                
				\node (u1) at (5.25,1.3) [label=225:$u_1$]{};
				\node (u2) at (6,2.6) [label=90:$u_2$]{};
				\node (u3) at (7.5,2.6) [label=90:$u_3$]{};
                \node (u4) at (8.25,1.3) [label=315:$u_4$]{};
				\foreach \from/\to in {v0/v1,v1/v2,v2/v3,v3/v4,v4/v5,v5/v6,v6/v7,v7/v8,u1/u2,u1/u3,u1/u4,u1/v4,u1/v5,u2/u3,u2/u4,u2/v4,u2/v5,u3/u4,u3/v4,u3/v5,u4/v4,u4/v5,v1/u2,v2/u2,v2/u4,v7/u3,v8/u1}
				\draw (\from) -- (\to);
			\end{tikzpicture}
                \caption{\label{WLgraphexample} WL-graph $G$ in Example \ref{WLexample}.}
                \end{figure}
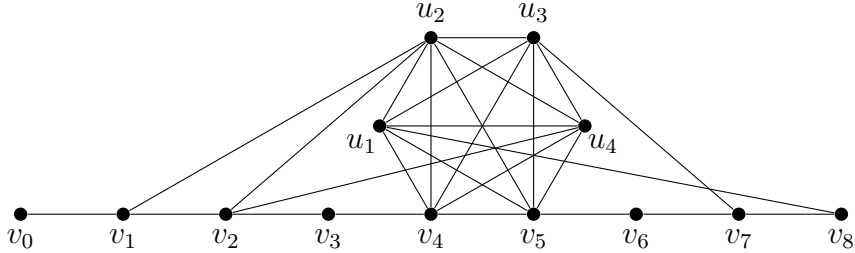
    \end{example}
    
    \begin{theorem} \label{WLregthm}
        Let $G$ be a connected graph on $[n]$. Then, the followings are equivalent:
        \begin{enumerate}[label=(\alph*)]
            \item $\ell (G) = n - \omega(G) +1$.
            \item $G$ is a WL-graph.
        \end{enumerate}
    \end{theorem}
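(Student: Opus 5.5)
The plan is to prove both directions by counting vertices. We compare a longest induced path $P$ with a maximum clique $K$, and we use the fact that an induced path meets any clique in at most two vertices, which must then be consecutive on the path.

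For $(b \Rightarrow a)$, let $G=P\cup K\cup H$ be as in Definition~\ref{WLdef}. Then $V(G)=V(P)\cup\{u_1,\ldots,u_{\omega-2}\}$, so $n=\ell+1+\omega-2=\ell+\omega-1$. Since $H$ only adds edges between path vertices and the $u_j$, the path $P$ is still induced in $G$, so $\ell(G)\ge \ell$. Next we show $\omega(G)=\omega$. The set $V(K)$ is a clique of size $\omega$. Any clique $Q$ contains at most two vertices of $P$, because $G[V(P)]=P$ is an induced path, and at most $\omega-2$ of the $u_j$. Hence $|Q|\le \omega$. Thus $n-\omega(G)+1=\ell$. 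Combining Theorem~\ref{lowerupperbreg} and Theorem~\ref{wGreg}, applied to the connected graph $G$, gives $\ell\le \ell(G)\le \reg(S/J_G)\le n-\omega(G)+1=\ell$, so equality holds throughout. Alternatively, one can see $\ell(G)\le n-\omega(G)+1$ directly by the counting argument in the next paragraph.

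For $(a \Rightarrow b)$, assume $\ell(G)=n-\omega(G)+1$, and set $\ell=\ell(G)$ and $\omega=\omega(G)$. Choose a longest induced path $P: v_0,\ldots,v_\ell$ and a maximum clique $Q$. Because $P$ is induced, $|Q\cap V(P)|\le 2$. Therefore $n\ge |V(P)\cup Q| = (\ell+1)+\omega-|Q\cap V(P)| \ge \ell+\omega-1$, and the hypothesis forces equality in both steps. This gives two facts:
\begin{itemize}
\item $V(G)=V(P)\cup Q$;
\item $|Q\cap V(P)|=2$ whenever $\omega\ge 2$.
\end{itemize}
If $\omega=1$, then connectivity gives $n=1$, and $\ell=1$ would contradict $\ell=n-\omega+1=1$ with $n=1$. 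So this degenerate case is excluded or trivial, and I would check it separately, noting that the definition requires $\omega\ge 2$.

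With $|Q\cap V(P)|=2$, the two common vertices are adjacent and lie on an induced path, so they are consecutive, say $v_t,v_{t+1}$. Write $Q=\{v_t,v_{t+1},u_1,\ldots,u_{\omega-2}\}$ and let $K=G[Q]$. Every edge of $G$ then falls into one of three types:
\begin{itemize}
\item an edge inside $V(P)$, which is an edge of $P$ since $P$ is induced;
\item an edge inside $Q$, which is an edge of $K$;
\item an edge between some $v_i$ and some $u_j$, which is placed in $H$.
\end{itemize}
Hence $G=P\cup K\cup H$ is a WL-graph.

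The main obstacle is only the careful handling of edge cases: $\omega=2$ (where $K$ is an edge of $P$ and $G=P$), and ensuring $|Q\cap V(P)|=2$ cannot be smaller. The latter follows from the equality $n=\ell+\omega-1$ together with $n\ge \ell+\omega+1-|Q\cap V(P)|$.
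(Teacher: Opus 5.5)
Your proof is correct and is essentially the paper's argument: both directions rest on the fact that an induced path meets a clique in at most two (necessarily consecutive) vertices, combined with the count $n\ge |V(P)\cup Q|$. The differences are cosmetic. In $(b\Rightarrow a)$ you get $\ell(G)\le \ell$ from the sandwich $\ell(G)\le\reg(S/J_G)\le n-\omega(G)+1$ rather than the paper's direct count; the combinatorial alternative you mention is the cleaner route, since the bound is purely combinatorial. Your separate $\omega=1$ remark is unnecessary and slightly misstated ($\ell(K_1)=0$, not $1$), because equality in your chain already forces $|Q\cap V(P)|=2$ and hence $\omega\ge 2$.
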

    \begin{proof}
        $\boldsymbol{(b \Rightarrow a)}$ : Let $G$ be a WL-graph with the path $P:v_0,v_1, \ldots, v_\ell$ and the complete graph $K$ with $V(K)=\{v_t,v_{t+1},u_1,\ldots,u_{\omega-2}\}$. We show $\ell(G)=\ell$ and $\omega(G)=\omega$.

        Firstly, let us show $\ell(G)=\ell$. Obviously, $P$ is an induced path of length $\ell$ in $G$. Hence, $\ell(G) \geq \ell$. Suppose that $Q$ is an induced path in $G$. Then, $V(Q)$ may contain at most two vertices from the set $V(K)$, otherwise this would contradict $Q$ being an induced path. Then, we have $|V(Q)| \leq |V(P) \setminus \{v_t,v_{t+1}\} |+2$. Hence, $|V(Q)| \leq \ell+1$ and $Q$ has length at most $\ell$. Therefore, we conclude $\ell(G)=\ell$.

        Now, let us show $\omega(G)=\omega$. Clearly, $V(K)$ is a clique with $|V(K)|=\omega$. Suppose that $F$ is a clique in $G$. Since $P$ is an induced path in $G$, we see that $F$ may contain at most two vertices from the set $V(P)$. Then, we have $|F| \leq |V(K) \setminus \{v_t,v_{t+1}\} |+2$ and hence, $|F| \leq \omega$. So, we conclude $\omega(G)=\omega$.

        As a last step, we write $|V(G)|=|V(P)|+|V(K) \setminus \{v_t,v_{t+1}\}|$ and hence, $n=\ell+\omega-1$. If we substitute $\ell = \ell(G)$ and $\omega=\omega(G)$, we obtain $n=\ell(G)+\omega(G)-1$. Therefore, $\ell(G)=n-\omega(G)+1$.
        
        \vspace{3mm}
        $\boldsymbol{(a \Rightarrow b)}$ : Suppose that $\ell (G) = n - \omega(G) +1$. Let $P:v_0,v_1,\ldots,v_{\ell}$ be a longest induced path in $G$ and let $F$ be a clique of maximum size in $G$ with $|F| = \omega$. Let $|V(P) \cap F|=m$.

        If $m>2$, then there exist at least three vertices $v_i,v_j,v_k \in F$. Hence, ${v_i,v_j,v_k}$ form a clique in $G$ but this contradicts $P$ being an induced path.

        If $m<2$, then $|V(P) \cup F| \leq |V(G)|$ implies $\ell(G)+1+\omega(G)-m \leq n$. Thus, $\ell(G) \leq n- \omega(G)+m-1$. Since $m < 2$, this contradicts the fact that $\ell (G) = n - \omega(G) +1$.

        The only possibility is $m=|V(P) \cap F|=2$. Hence, there are two vertices $v_t,v_s \in F$ with $0 \leq t < s \leq \ell$. Since $P$ is an induced path and $\{v_t,v_s\} \in E(G)$, then we have $s=t+1$. It means that $F$ contains two consecutive vertices $v_t,v_{t+1}$ of the path $P$. Then, we see that $F$ has the form $F = \{v_{t},v_{t+1},u_1,\ldots,u_{\omega-2}\}$ where $u_1,\ldots,u_{\omega-2} \in V(G) \setminus V(P)$. Since $\ell(G)=n-\omega(G)+1$, we write $|V(G)|=n=\ell+\omega-1=|V(P) \cup F|$. So, we see $V(G)=V(P) \cup F$. Let $K$ be the complete graph on the vertex set $V(K)=F$. Also, we consider the subgraph $H$ of $G$ with $V(H)=V(G)$ and $E(H)=\{ \{v_i,u_j\} \in E(G) :  0 \leq i \leq \ell, \hspace{1mm} 1 \leq j \leq \omega-2 \}$. Hence, we observe that $G=P \cup K \cup H$. Therefore, $G$ is a WL-graph.
    \end{proof}

    \begin{corollary} \label{WLgraphcorollary}
        Let $G$ be a connected graph on $[n]$. Then, the following statements are equivalent:
        \begin{enumerate}[label=(\alph*)]
            \item $\reg(S/J_G) = \ell(G) = n-\omega(G)+1$.
            \item $G$ is a WL-graph.
        \end{enumerate}
    \end{corollary}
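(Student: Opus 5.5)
The corollary follows by combining the sandwich of bounds for connected graphs with the purely combinatorial characterization in Theorem~\ref{WLregthm}, exactly as Corollary~\ref{CLgraphcorollary} was deduced from Theorem~\ref{CLgraphtheorem}. Throughout, $G$ is connected on $[n]$. Then Theorem~\ref{lowerupperbreg} and Theorem~\ref{wGreg} give
\[
\ell(G) \leq \reg(S/J_G) \leq n-\omega(G)+1 .
\]

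For (a) $\Rightarrow$ (b), the hypothesis contains in particular the equality $\ell(G) = n-\omega(G)+1$. By the implication (a) $\Rightarrow$ (b) of Theorem~\ref{WLregthm}, $G$ is a WL-graph.

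For (b) $\Rightarrow$ (a), suppose $G$ is a WL-graph. We first check that $G$ is connected, so that the connected hypothesis of Theorem~\ref{wGreg} applies. Connectivity holds because $P$ is a path and $K$ is a complete graph sharing the vertices $v_t,v_{t+1}$ with $P$. The edges of $H$ only add edges among vertices already present, so $G$ is connected. In any case, the statement of the corollary already assumes $G$ is connected.

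The implication (b) $\Rightarrow$ (a) of Theorem~\ref{WLregthm} then gives $\ell(G)=n-\omega(G)+1$. Hence the two outer terms of the displayed chain coincide, and therefore
\[
\reg(S/J_G)=\ell(G)=n-\omega(G)+1 .
\]

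There is no real obstacle here. All the combinatorial work was carried out in Theorem~\ref{WLregthm}; the only point requiring care is that Theorem~\ref{wGreg} requires connectivity, which the standing assumption supplies.
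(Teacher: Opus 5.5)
Your proposal is correct and is the paper's own argument: the paper also proves the corollary by sandwiching $\reg(S/J_G)$ between $\ell(G)$ (Theorem~\ref{lowerupperbreg}) and $n-\omega(G)+1$ (Theorem~\ref{wGreg}), then applying the combinatorial equivalence of Theorem~\ref{WLregthm}. Your extra check that WL-graphs are connected is harmless but unnecessary, since connectedness is already a hypothesis of the statement.
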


    \begin{proof}
        It follows from Theorem \ref{lowerupperbreg}, Theorem \ref{wGreg} and Theorem \ref{WLregthm}.
    \end{proof}
\subsection{Possible regularity values between $\ell(G)$ and $|V(G)|-\omega(G)+1$}\label{subsec:WL}
    It is known that $\ell(G) \leq \reg(S/J_G) \leq |V(G)|-\omega(G)+1$ for any connected graph $G$. It is natural to ask for which triples $(\ell, r, \overline{\omega})$ of natural numbers with $\ell \leq r \leq \overline{\omega}$ there exists a connected graph $G$ satisfying $\ell(G) = \ell$, $\reg(S/J_G) = r$ and $|V(G)|-\omega(G)+1 = \overline{\omega}$. We prove in Theorem \ref{3lrwthrm} that such connected graphs exist for all $(\ell,r,\overline{\omega})$ satisfying $3 \leq \ell \leq r \leq   \overline{\omega}$. Before presenting the proof, we recall a few basic concepts that will be needed in what follows.
    
    A vertex $v$ in a graph $G$ is called a \textit{cut vertex} if $G-v$ has more connected components than $G$. 
    
    Let $v$ be a cut vertex of a graph $G$ and let $H_1, \ldots, H_t$ be the connected components of $G - v$. Also, let $G_i$ be the induced subgraph of $G$ on the vertex set $V\left(H_i\right) \cup\{v\}$ for $1 \leq i \leq t$. Then, $G_1, \ldots, G_t$ are called the \textit{splits} of $G$ at $v$ \cite{RBEICBG}.

    \begin{theorem} \cite[Theorem 3.1]{RBEICBG} \label{gluingreg}
        Let $G$ be a graph and let $G_1, G_2$ be the splits of $G$ at a vertex $v \in V(G)$. If $v$ is a simplicial vertex in both $G_1$ and $G_2$, then
        $$
          \reg (S / J_G )= \reg (S / J_{G_1})+ \reg (S / J_{G_2}).
        $$
    \end{theorem}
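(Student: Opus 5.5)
The plan is to reduce the statement to a statement about initial ideals with respect to a suitably chosen lexicographic order. The reduction is legitimate because of the Conca--Varbaro theorem: if an ideal has a squarefree initial ideal, then the ideal and its initial ideal have the same regularity. The reduced Gröbner basis of $J_G$ with respect to the lex order $x_1>\cdots>x_n>y_1>\cdots>y_n$ consists of the binomials $u_\pi f_{ij}$ attached to admissible paths $\pi$ from $i$ to $j$ (Herzog et al.), so $\mathrm{in}_<(J_G)$ is squarefree and $\reg(S/J_G)=\reg(S/\mathrm{in}_<(J_G))$ for every labelling of the vertices. The same holds for $G_1$ and $G_2$.

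\textbf{Labelling.} Relabel $V(G)$ so that every vertex of $G_1-v$ is smaller than $v$, and every vertex of $G_2-v$ is larger than $v$.

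\textbf{Admissible paths do not cross $v$.} The first step is to show that every admissible path of $G$ lies entirely in $G_1$ or entirely in $G_2$.
\begin{itemize}
\item Since $v$ is a cut vertex, any path from a vertex of $G_1-v$ to a vertex of $G_2-v$ passes through $v$ as an interior vertex.
\item For such a path with endpoints $i<j$, we have $i<v<j$. An admissible path requires each interior vertex $k$ to satisfy $k<i$ or $k>j$, so the vertex $v$ already violates admissibility.
\item A path with both endpoints in one split that wanders into the other split must enter and leave through $v$, which is impossible for a path.
\end{itemize}
Hence the reduced Gröbner basis of $J_G$ is the union of those of $J_{G_1}$ and $J_{G_2}$, and
\[
\mathrm{in}_<(J_G)=\mathrm{in}_<(J_{G_1})+\mathrm{in}_<(J_{G_2}).
\]
In this step I also record that $v$ is not an interior vertex of any admissible path inside $G_1$ or inside $G_2$. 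This follows from simpliciality: if $v$ were interior, its two path-neighbours would both lie in the unique maximal clique containing $v$, giving a chord and contradicting the minimality built into admissible paths.

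\textbf{The two initial ideals use disjoint variables.} The key point is that $\mathrm{in}_<(J_{G_1})$ and $\mathrm{in}_<(J_{G_2})$ live in disjoint sets of variables. Each generator of $\mathrm{in}_<(J_{G_1})$ has the form $x_i y_j\prod_{k>j}x_k\prod_{k<i}y_k$, for a path from $i$ to $j$ with $i<j$.
\begin{itemize}
\item In $G_1$ the vertex $v$ is the largest label. It can occur as $j$, contributing $y_v$, but it cannot occur as $i$.
\item By the previous observation $v$ is never interior, so $x_v$ never occurs in $\mathrm{in}_<(J_{G_1})$.
\item Symmetrically, in $G_2$ the vertex $v$ is the smallest label, so only $x_v$ can occur and $y_v$ never occurs in $\mathrm{in}_<(J_{G_2})$.
\end{itemize}
Thus $\mathrm{in}_<(J_{G_1})$ lies in $K[x_a,y_a : a\in V(G_1)\setminus\{v\}][y_v]$ and $\mathrm{in}_<(J_{G_2})$ lies in $K[x_a,y_a : a\in V(G_2)\setminus\{v\}][x_v]$. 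These are polynomial rings in disjoint sets of variables. The tensor product of the minimal free resolutions is then a minimal free resolution of the sum, so regularities add:
\[
\reg(S/J_G)=\reg(S/\mathrm{in}J_G)=\reg(S/\mathrm{in}J_{G_1})+\reg(S/\mathrm{in}J_{G_2})=\reg(S/J_{G_1})+\reg(S/J_{G_2}).
\]
Extending each $S_i/J_{G_i}$ to the full ring by the remaining variables does not change its regularity.

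\textbf{Main obstacle.} I expect the delicate step to be the precise check that $v$ cannot be an interior vertex of an admissible path within a split. I would handle it by showing directly that any path through a simplicial vertex admits a shortcut, which violates the minimality condition in the definition of admissible path.

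If one prefers to avoid Conca--Varbaro, a fallback is to split the argument into two inequalities.
\begin{itemize}
\item For the upper bound, use $\reg(J)\le\reg(\mathrm{in}J)$.
\item For the lower bound, note that the disjointness above says $y_v$ is a regular element modulo $\mathrm{in}(J_{G_2})$ and $x_v$ is regular modulo $\mathrm{in}(J_{G_1})$. Lift this to $\mathrm{Tor}^S(S/J_{G_1},S/J_{G_2})_{>0}=0$, so that the Betti numbers of $S/J_G$ are products and the regularity is additive.
\end{itemize}
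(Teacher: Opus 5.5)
Your proof is correct. The paper itself gives no proof of this statement: it is quoted from \cite{RBEICBG} and used only as a black box in Theorem~\ref{3lrwthrm}.

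You share the combinatorial core of the standard argument for gluing at a free vertex:
\begin{itemize}
\item the labelling that makes $v$ the largest vertex of $G_1$ and the smallest vertex of $G_2$;
\item the observation that no admissible path passes through $v$ from one side to the other, which gives $\mathrm{in}_<(J_G)=\mathrm{in}_<(J_{G_1})+\mathrm{in}_<(J_{G_2})$;
\item simpliciality on both sides, which keeps $v$ out of the interior of admissible paths, so $x_v$ never occurs in $\mathrm{in}_<(J_{G_1})$ and $y_v$ never occurs in $\mathrm{in}_<(J_{G_2})$.
\end{itemize}

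Where you differ is in transporting this back from the initial ideals to $J_G$, $J_{G_1}$, $J_{G_2}$. You apply Conca--Varbaro three times. The classical route stays elementary by a doubling trick:
\begin{itemize}
\item Replace $v$ in $G_2$ by a new vertex $w$ to get $G_2'$. Then $J_{G_1}+J_{G_2'}\subset S[x_w,y_w]$ is a sum of ideals in disjoint sets of variables, so its graded Betti numbers are products.
\item Take a lex order that restricts to the two block orders and has $x_v>x_w$ and $y_w>y_v$. The linear forms $x_v-x_w$ and $y_w-y_v$ then have leading variables $x_v$ and $y_w$. By your own computation, neither occurs in the initial ideal, so the two forms are a regular sequence.
\item Factoring them out gives $S/J_G$ with the same graded Betti numbers.
\end{itemize}
That argument needs only Buchberger's criterion and yields the full product formula for the graded Betti numbers of $S/J_G$. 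Yours is shorter but rests on a much deeper theorem, and it recovers only what that theorem controls (regularity, depth, extremal Betti numbers). That is all that is needed here.

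On your fallback: the second bullet is in effect the classical argument. The regular sequence above is exactly what forces $\mathrm{Tor}^S_{>0}(S/J_{G_1},S/J_{G_2})=0$, and that vanishing by itself gives equality.

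The first bullet does not stand alone, though. It gives $\reg(S/J_G)\le\reg(S/\mathrm{in}_<(J_G))=\reg(S/\mathrm{in}_<(J_{G_1}))+\reg(S/\mathrm{in}_<(J_{G_2}))$. Turning this into a bound by $\reg(S/J_{G_1})+\reg(S/J_{G_2})$ requires the reverse inequality $\reg(S/\mathrm{in}_<(J_{G_i}))\le\reg(S/J_{G_i})$, which is precisely Conca--Varbaro again.
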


    We are now in a position to present the proof.
    
    \begin{theorem} \label{3lrwthrm}
        Suppose that $\ell,r,\overline{\omega} \in \mathbb{N}$ with $3 \leq \ell \leq r \leq \overline{\omega}$. Then, there exists a connected graph $G$ such that $\ell = \ell(G)$, $r=\reg(S/J_G)$ and $\overline{\omega} = |V(G)| - \omega(G) +1$.
    \end{theorem}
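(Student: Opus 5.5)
The plan is an explicit construction. I would build a basic block with $\ell=3$ whose regularity attains the bound $|V|-\omega+1$, then attach a path to raise $\ell$, then add extra pendant vertices to raise $\overline{\omega}$ without changing the other two invariants. I have not verified that the block attains its bound; that step is flagged below as the main risk.

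\textbf{The basic block.} For $k\ge 2$ and $m\ge k$, let $A_{m,k}$ be the complete graph on $x_1,\ldots,x_m$ with a pendant vertex $y_i$ attached to $x_i$ for $1\le i\le k$. Its invariants are read off directly.
\begin{itemize}
\item $\ell(A_{m,k})=3$: the path $y_1,x_1,x_2,y_2$ is induced, and an induced path contains at most two clique vertices.
\item $\omega(A_{m,k})=m$, so $|V|-\omega+1=k+1$.
\end{itemize}
I would show $\reg(S/J_{A_{m,k}})=k+1$.
\begin{itemize}
\item The upper bound is Theorem~\ref{wGreg}.
\item For the lower bound I would use induction on $k$, with base case the path of length $3$ (Proposition~\ref{pathreg}).
\item For the induction step I would apply the exact sequence (\ref{exactseq}) at the non-simplicial vertex $x_k$. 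Here $G-x_k$ is $A_{m-1,k-1}$ together with the isolated vertex $y_k$. The graph $G_{x_k}$ is a copy of $A_{m+1,k-1}$, with $y_k$ absorbed into the clique. Also $G_{x_k}-x_k\cong A_{m,k-1}$.
\item A long exact sequence of $\operatorname{Tor}$ then gives $\reg(S/J_G)\ge \reg(S_v/J_{G_v-v})+1=k+1$, provided the top-degree Betti number of $S_v/J_{G_v-v}$ is not cancelled by the middle term. Since the middle terms have regularity only $k$, the degree $k+1$ contribution from $G_v-v$ should survive.
\end{itemize}

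\textbf{Gluing to reach $\ell$ and $r$.} Put $k=r-\ell+2\ge 2$ and $m=k$. The vertex $y_1$ is simplicial in $A_{k,k}$ and is an endpoint of the path $P$ of length $\ell-3$. Glue $A_{k,k}$ and $P$ at $y_1$; if $\ell=3$, take $P$ to be trivial. Theorem~\ref{gluingreg} then gives
\[
\reg(S/J_G)=(k+1)+(\ell-3)=r .
\]
For the longest induced path, $\ell(G)=\ell$. Indeed, an induced path through the cut vertex $y_1$ splits into an induced path in each split ending at $y_1$. The longest induced path of $A_{k,k}$ ending at $y_1$ has length $3$, namely $y_1,x_1,x_2,y_2$. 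Finally, $|V(G)|-\omega(G)+1=(2k+\ell-3)-k+1=k+\ell-2=r$. This settles the case $\overline{\omega}=r$.

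\textbf{Raising $\overline{\omega}$.} To get $\overline{\omega}>r$, attach $\overline{\omega}-r$ further pendant vertices $w_1,\ldots$ to the vertex $x_2$, which already carries the whisker $y_2$. This raises $|V|-\omega+1$ by exactly $\overline{\omega}-r$ and leaves $\omega$ unchanged. It also leaves $\ell$ unchanged: a path $w_j,x_2,\ldots$ is no longer than the corresponding path $y_2,x_2,\ldots$. The regularity is again at least $r$ by Theorem~\ref{indsubgraphreg}. For the upper bound, apply inequality (\ref{regmaxineq}) at $x_2$.
\begin{itemize}
\item $G-x_2$ is $A_{k-1,k-1}$ glued to $P$, plus isolated vertices, so its regularity is $r-1$.
\item In $G_{x_2}$ all the $w_j$ and $y_2$ join the clique. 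So $G_{x_2}$ is a clique with $k-1$ whiskers glued to $P$, of regularity $r-1$.
\item $G_{x_2}-x_2$ has the same form, so the last term of (\ref{regmaxineq}) is at most $(r-1)+1=r$.
\end{itemize}
Hence the regularity stays equal to $r$.

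\textbf{Main obstacle.} The delicate step is the lower bound $\reg(S/J_{A_{m,k}})\ge k+1$ in the first part, since the $\operatorname{Tor}$ argument needs a non-cancellation check. If that check fails, I would instead look for an induced subgraph of regularity $k+1$. Alternatively, I would replace the block by $k-1$ triangles sharing a common vertex, as in Theorem~\ref{2lrcthrm}, whose regularity is already known, and then re-balance $\overline{\omega}$ by enlarging one triangle into a larger clique.
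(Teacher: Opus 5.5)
You have a genuine gap at the step you flagged, and everything else depends on it. The formula $\reg(S/J_{A_{m,k}})=k+1$ is reused for $G-x_2$, $G_{x_2}$ and $G_{x_2}-x_2$.

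Write the sequence (\ref{exactseq}) at $v=x_k$ as $0\to S/J_G\to B\to C\to 0$. By induction $\reg C=\reg(S_v/J_{A_{m,k-1}})=k$, and, as you note yourself, $\reg B=k$ too. A nonzero element of $\mathrm{Tor}^S_{i+1}(C,K)_{i+1+k}$ gives $\mathrm{Tor}^S_{i}(S/J_G,K)_{i+1+k}\neq 0$ only if it is not in the image of $\mathrm{Tor}^S_{i+1}(B,K)_{i+1+k}$. But $\reg B=k$ is exactly what allows that group to be nonzero, so nothing prevents the map from being onto. The standard inequality $\reg C\le\max\{\reg(S/J_G)-1,\reg B\}$ forces $\reg(S/J_G)\ge \reg C+1$ only when $\reg C>\reg B$. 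Equal regularity of the middle and right terms is precisely the case where cancellation can occur, not a reason to exclude it. As written, your lower bound is unproved.

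The repair uses a theorem you already invoke. Take $m\ge 2$ and $i\le k$. Then $G-x_i$ has two components, and the splits of $A_{m,k}$ at $x_i$ are the edge $\{x_i,y_i\}$ and $A_{m,k}-y_i\cong A_{m,k-1}$. The vertex $x_i$ is simplicial in both, since its neighbours in the second split form the clique $\{x_j : j\neq i\}$. Theorem~\ref{gluingreg} gives $\reg(S/J_{A_{m,k}})=1+\reg(S/J_{A_{m,k-1}})$, hence $\reg(S/J_{A_{m,k}})=k+\reg(S/J_{K_m})=k+1$. No exact sequence and no Theorem~\ref{wGreg} are needed.

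For $m=1$ the value is $k$ instead. This only affects $G-x_2$ when $k=2$, whose regularity is then $r-2$ rather than $r-1$; that is harmless for an upper bound.

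With the lemma in place, the rest of your argument checks out:
\begin{itemize}
\item the gluing at $y_1$;
\item the values of $\ell(G)$ and $|V(G)|-\omega(G)+1$;
\item the invariance of $\ell$ and $\omega$ under the pendants $w_j$;
\item the three terms of (\ref{regmaxineq}) at $x_2$, which are at most $r-1$, $r-1$ and $r$.
\end{itemize}

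For $\overline{\omega}=r$, your graph is literally the paper's graph. The correspondence is $x_1\leftrightarrow 3$, $x_2\leftrightarrow 2$, $y_2\leftrightarrow 1$, with $x_i\leftrightarrow q_{i-2}$ and $y_i\leftrightarrow q'_{i-2}$ for $i\ge 3$, and $y_1$ followed by $P$ corresponding to $4,\ldots,\ell+1$. The paper treats the whiskered clique $K'\cup H$ by the same repeated gluing.

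The two routes differ only in how the excess $\overline{\omega}-r$ is absorbed.
\begin{itemize}
\item The paper enlarges both cliques through vertex $2$ by $\overline{\omega}-r$ vertices each. So $|V(G)|$ grows by $2(\overline{\omega}-r)$ and $\omega$ by $\overline{\omega}-r$. The graph stays decomposable, and Theorem~\ref{gluingreg} alone gives $\reg(S/J_G)=1+1+(r-\ell)+(\ell-2)$.
\item Your pendant vertices at $x_2$ leave $\omega$ untouched but destroy decomposability at $x_2$. You therefore need the extra two-sided argument via Theorem~\ref{indsubgraphreg} and (\ref{regmaxineq}). It works, at the cost of one more step.
\end{itemize}
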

    \begin{proof}       
        Let $U=\{u_1,\ldots,u_{\overline{\omega}-r}\}$, $V=\{v_1,\ldots,v_{\overline{\omega}-r}\}$, $Q=\{q_1,\ldots,q_{r-\ell}\}$, $Q'=\{q_1',\ldots,q_{r-\ell}'\}$, $\{1,2, \dots, \ell+1\}$ be disjoint sets. Let $K$ and $K'$ be the complete graphs on the vertex sets $\{1,2\} \cup U$ and $\{2,3\} \cup V \cup Q$, respectively. Moreover, let $P$ be the path graph with the consecutive vertices $1,2,\dots, \ell+1$ and $H$ be the graph consisting of the edges $\{q_k,q_k'\}$ for $1 \leq k \leq r-\ell$. Now, let us consider the graph $G = P \cup K \cup K' \cup H$.

        So, we have $\ell (G) = \ell$ and $|V(G)|-\omega(G)+1 = \overline{\omega}$. By applying Theorem \ref{gluingreg} to the vertex $2$, we obtain 
        $$
        \reg(S/J_G)=\reg(S/J_K)+\reg(S/J_{K' \cup H \cup (P-\{1,2\})}).
        $$ 
        By applying Theorem \ref{gluingreg} to the vertex $3$ in the graph $K' \cup H \cup (P-\{1,2\})$, we obtain 
        $$
        \reg(S/J_G)=\reg(S/J_K) + \reg(S/J_{K' \cup H}) +  \reg(S/J_{P-\{1,2\}}).
        $$ 
        Similarly, by applying Theorem \ref{gluingreg} to the vertices $q_1,q_2,\ldots,q_{r-\ell}$ in the graph $K' \cup H$ repeatedly, it can be seen that 
        $$
        \reg(S/J_G)=\reg(S/J_K)+\reg(S/J_{K'})+\reg(S/J_H)+\reg(S/J_{P-\{1,2\}}).
        $$ 
        As $K$ and $K'$ are complete graphs, we have $\reg(S/J_K)=\reg(S/J_{K'})=1$ by Theorem \ref{completereg}. Since $P-\{1,2\}$ is a path of length $\ell-2$ and $H$ is a disjoint union of $r-\ell$ disjoint paths of length $1$, Proposition \ref{pathreg} and Remark \ref{regscc} imply $\reg(S/J_{P-\{1,2\}})=\ell-2$ and $\reg(S/J_H)=r-\ell$. Therefore, it can be seen that $\reg (S/J_G) = r$.
    \end{proof}

    Note that Theorem \ref{3lrwthrm} cannot be extended to the case $\ell = 2$. For the case $\ell=2$, a connected graph $G$ satisfying the prescribed equalities do not always exist as we show in the next proposition.

    \begin{proposition}
        There is no connected graph $G$ such that      
        \[\ell(G)=2, \reg(S/J_G)=3 \text{ and } |V(G)| - \omega(G) +1=3.\]
    \end{proposition}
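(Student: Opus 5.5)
We plan to show that every connected graph $G$ with $\ell(G)=2$ and $|V(G)|=\omega(G)+2$ satisfies $\reg(S/J_G)\le 2$. The key structural input is that a connected graph with $\ell(G)\le 2$ contains no induced path on four vertices, so it is a connected cograph. Hence, if $|V(G)|\ge 2$, $G$ is the join $G=G_1 * G_2$ of two nonempty induced subgraphs. We will then use the known formula for the regularity of binomial edge ideals of joins (Saeedi Madani and Kiani): if $G_1,G_2$ are not both complete, then
\[
\reg(S/J_{G_1*G_2})=\max\{\reg(S/J_{G_1}),\reg(S/J_{G_2}),2\}.
\]
This formula is not stated in the paper and would have to be quoted. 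If both $G_i$ are complete, then $G$ is complete and $\reg(S/J_G)=1$ by Theorem~\ref{completereg}.

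We argue by induction on $n=|V(G)|$ that a connected graph with $\ell(G)\le 2$ and $n-\omega(G)\le 2$ has $\reg(S/J_G)\le 2$. Write $n_i=|V(G_i)|$ and $\omega_i=\omega(G_i)$. Since $\omega(G_1*G_2)=\omega_1+\omega_2$, we get $(n_1-\omega_1)+(n_2-\omega_2)=n-\omega(G)\le 2$, so each $G_i$ has deficit $d_i=n_i-\omega_i\le 2$. It therefore suffices to show $\reg(S/J_{G_i})\le 2$ for $i=1,2$. Each $G_i$ is an induced subgraph of $G$, so every component of $G_i$ again has no induced $P_4$. By Remark~\ref{regscc} we compute $\reg(S/J_{G_i})$ componentwise.

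Let $C$ be the component of $G_i$ containing a maximum clique, and let $e=|V(C)|-\omega_i\le d_i\le 2$. The other components contain at most $2-e$ vertices in total.
\begin{itemize}
\item If $e=0$, then $C$ is complete, with regularity $1$. The remaining vertices form either a $K_2$ (regularity $1$) or isolated vertices (regularity $0$), so the total is at most $2$.
\item If $e=1$, Theorem~\ref{wGreg} gives $\reg\le 2$ for $C$. At most one further vertex remains, and it is isolated, so the total is at most $2$.
\item If $e=2$, then $C$ is all of $G_i$. It is connected, has no induced $P_4$, has deficit $2$, and has fewer vertices than $G$, so the induction hypothesis gives $\reg\le 2$.
\end{itemize}
The base cases, where $G$ is complete or a single vertex, are trivial.

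Combining these cases yields $\reg(S/J_G)\le 2$ for every such $G$. This contradicts $\reg(S/J_G)=3$, which proves the proposition.

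I expect the main obstacle to be justifying the join step. If we want to avoid citing the join formula, we can instead take a vertex $v\in V(G_1)$ adjacent to all of $G_2$ and apply inequality (\ref{regmaxineq}). Then $G_v$ and $G_v-v$ are obtained by completing the neighbourhood of $v$, and one must check that $\reg(S_v/J_{G_v-v})\le 1$ in the relevant cases, which requires an additional case analysis. The cograph decomposition together with the join formula is the cleaner route, so I would try that first.
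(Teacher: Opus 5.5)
Your proof is correct, but it takes a different route from the paper's.

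\textbf{The paper's route.} The paper never uses the cograph structure globally. It fixes a maximum clique $F$ and writes $V(G)=\{u,v\}\cup F$. It then splits into cases according to whether $uv\in E(G)$ and how $u,v$ attach to $F$, using $\ell(G)=2$ only locally to force adjacencies. The regularity is bounded by applying inequality (\ref{regmaxineq}) at $v$, or successively at the vertices $1,\ldots,t$, and the argument finishes with Theorems~\ref{completereg}, \ref{numberofmc} and \ref{lowerupperbreg}. This stays within the tools the paper recalls, but it is ad hoc and somewhat delicate. In its Case~1, the remark that $v$ has no neighbour in $G'=G-\{1,\ldots,t\}$ overlooks the edge $uv$. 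The bound survives only because $P_4$-freeness forces $G'=K_2\sqcup K_{\omega-t}$.

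\textbf{Your route.} You use two facts that are not in the paper:
\begin{enumerate}
\item a connected $P_4$-free graph on at least two vertices is a join;
\item the Saeedi Madani--Kiani join formula.
\end{enumerate}
You combine these with the additivity of $|V|-\omega$ under joins. Your induction and the three cases $e=0,1,2$ all check out.

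\textbf{What each buys.} Your argument depends on the external join theorem, which you should cite precisely (Saeedi Madani--Kiani, \emph{Binomial edge ideals of graphs}, Electron.\ J.\ Combin.\ 2012). In exchange it gives a uniform mechanism that also bears on the paper's closing Question. For $\ell(G)=2$ it reduces everything to distributing the deficit among the components of the join factors. For instance, $K_1*(P_3\sqcup K_2)$ has $\ell(G)=2$, $\reg(S/J_G)=3$ and $|V(G)|-\omega(G)+1=4$.
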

     
    \begin{proof}
    
    Assume that there exists a connected graph $G$ for $\ell=\ell(G)=2$, $r=\reg(S/J_G)=3$ and $\overline{\omega}=|V(G)| - \omega(G) +1=3$. Since $\overline{\omega}=3$, $G$ has the form $V(G)=\{v,u,1,2,\ldots,\omega \}$ where $F=\{1,2,\ldots,\omega \}$ is a clique in $G$ satisfying $\omega(G)=\omega$. We may examine two cases.

    \vspace{3mm}
    \noindent \textit{Case 1:} Suppose that $\{v,u\} \in E(G)$. Then, at least one of $v$ or $u$ has some neighbours from $F$. Without loss of generality, we may assume that the neighbours of $v$ belonging to $F$ are $1,2,\ldots,t$ where $t < \omega$ due to $\overline{\omega}=3$. If $u$ is not adjacent to at least one of the vertices $1,2,\ldots,t$, then $u$ is adjacent to all vertices $t+1,t+2, \ldots, \omega$ because of $\ell =2$. Also, $v$ is not a simplicial vertex in $G$. Then, we use the inequality (\ref{regmaxineq}) for the vertex $v$, that is,
        $$
            \reg(S/J_G) \leq \max\{ \reg(S_{v}/J_{G-v}), \reg(S/J_{G_{v}}), \reg(S_{v}/J_{G_{v}-v})+1 \}.
        $$
    Clearly, $\ell(G-{v})=2$ and $c(G-{v})=2$. By Theorem \ref{lowerupperbreg} and Theorem \ref{numberofmc}, we find $\reg(S_{v}/J_{G-v})=2$. Similarly, $\ell(G_{v})=2$ and $c(G_{v})=2$ imply that $\reg(S/J_{G_v})=2$. Also, $G_v-v$ is a complete graph and $\reg(S_v/J_{G_v-v})=1$ by Theorem \ref{completereg}. 
    Thus, we have $\reg(S/J_G) \leq 2$. Since $\ell = 2$, Theorem \ref{lowerupperbreg} implies that $\reg(S/J_G)=2$. If $u$ is adjacent to all of the vertices $1,\ldots,t$, then $u$ is not adjacent to at least one of $t+1,\ldots,\omega$ due to $\overline{\omega}=3$. Applying the inequality (\ref{regmaxineq}) consecutively to the vertices $1,\ldots,t$ and using Theorem \ref{completereg}, we see $\reg(S/J_G) \leq \max \{ \reg(S'/J_{G'}), 2\}$ where $G'=G-\{1,\ldots,t\}$ and $S'$ is the polynomial ring obtained from $S$ by removing the variables indexed by $1,\ldots,t$. Here, we have $\reg(S'/J_{G'}) = \reg(S'_v/J_{G'-v})$ since $v$ does not have any neighbours in $G'$. Also, $c(G'-v)=2$ imply that $\reg(S'_v/J_{G'-v}) \leq 2$. Hence, we obtain $\reg(S/J_G)=2$.

    \vspace{3mm}
    \noindent \textit{Case 2:} Suppose that $\{v,u\} \notin E(G)$. Then, $v$ is adjacent to some vertices in $F$, say $1,\ldots,t$, where $t < \omega$ due to $\overline{\omega}=3$. Similarly, $u$ is adjacent to some vertices in $F$. If $u$ has a neighbour $i \in F$ such that $i > t$, then $u$ is adjacent to all vertices $1,\ldots,t$ due to $\ell (G) =2$. Applying the inequality (\ref{regmaxineq}) consecutively to the vertices $1,\ldots,t$ and using Theorem \ref{completereg}, we see $\reg(S/J_G) \leq \max \{ \reg(S'/J_{G'}), 2\}$ where $G'=G-\{1,\ldots,t\}$ and $S'$ is the polynomial ring defined in Case 1. Then, it can be seen that $\reg(S'/J_{G'}) = \reg(S'_v/J_{G'-v})$ since $v$ does not have any neighbours in $G'$. Also, we see $\reg(S'_v/J_{G'-v}) \leq c(G'-v)=2$. Hence, we obtain $\reg(S/J_G)=2$. If $u$ is not adjacent to any $i \in F$ with $i>t$, then the neighbours of $u$ are $i_1,\ldots,i_s$ where $\{i_1,\ldots,i_s\} \subseteq \{1,\ldots,t\}$. By applying the inequality (\ref{regmaxineq}) consecutively to the vertices $i_1,\ldots,i_s$, we see that $\reg(S/J_G) = 2$ by similar arguments.
    
    So, we conclude that there is no connected graph for the values $\ell=2$, $r=3$ and $\overline{\omega}=3$.
\end{proof}
 This leads to the following natural question, whose answer remains open.
 
    \begin{qquestion}
        For which values of $r$ and $\overline{\omega}$ such that $2 \leq r \leq \overline{\omega}$, does there exist a connected graph $G$ satisfying $2=\ell(G)$, $r=\reg(S/J_G)$ and $\overline{\omega} = |V(G)| - \omega(G) +1$?
    \end{qquestion}

\end{document}